\newtheorem{theorem}{Theorem}[section]
\newtheorem{proposition}[theorem]{Proposition}
\newtheorem{lemma}[theorem]{Lemma}
\theoremstyle{definition}
\newtheorem{definition}[theorem]{Definition}
\newtheorem{example}{Example}
\newtheorem{remark}{Remark}
\DeclareMathOperator{\Ad}{Ad}
\DeclareMathOperator{\End}{End}
\DeclareMathOperator{\id}{id}
\DeclareMathOperator{\Grass}{Grass}
\DeclareMathOperator{\Supp}{Supp}
\DeclareMathOperator{\Stab}{Stab}
\newcommand{\bracket}[1]{\langle#1\rangle}
\newcommand{\ND}[1]{N(#1,\delta)}
\newcommand{\NR}[1]{N(#1,\rho)}
\newcommand{\eps}{\varepsilon}
\newcommand{\GL}{\mathrm{GL}}
\newcommand{\SL}{\mathrm{SL}}
\newcommand{\C}{\mathbb{C}}
\newcommand{\R}{\mathbb{R}}
\newcommand{\Z}{\mathbb{Z}}
\newcommand{\Q}{\mathbb{Q}}
\newcommand{\N}{\mathbb{N}}
\newcommand{\ba}{\mathbf{a}}
\newcommand{\bb}{\mathbf{b}}
\newcommand{\g}{\mathfrak{g}}
\newcommand{\h}{\mathfrak{h}}
\newcommand{\s}{\mathfrak{s}}
\newcommand{\rf}{\mathfrak{r}}
\newcommand{\nf}{\mathfrak{n}}
\newcommand{\cG}{\mathcal{G}}
\newcommand{\cF}{\mathcal{F}}
\newcommand{\cP}{\mathcal{P}}
\newcommand{\norm}[1]{\lVert#1\rVert}
\renewcommand{\phi}{\varphi} 
\newcommand{\dd}{\,\mathrm{d}}
\DeclareMathOperator{\Mat}{Mat}
\DeclareMathOperator{\Derive}{D}
\DeclareMathOperator{\Id}{Id}
\DeclareMathOperator{\SU}{SU}
\title{Sum-product for real Lie groups}
\date{}
\author{Weikun He \\ \small{Einstein Institute of Mathematics, The Hebrew University of Jerusalem} \and Nicolas de Saxcé \\ \small{CNRS, Université Paris 13, Villetaneuse}}
\begin{document}

\maketitle

\begin{abstract}
We prove a discretized sum-product theorem for representations of Lie groups whose Jordan-Hölder decomposition does not contain the trivial representation.
This expansion result is used to derive a product theorem in perfect Lie groups.
\end{abstract}
\section{Introduction}

Throughout this paper, $G$ will denote a connected real Lie group, endowed with a left-invariant Riemmanian metric.
For $x \in G$ and $\rho > 0$, we denote by $B_G(x,\rho)$ the ball of center $x$ and radius $\rho$ in $G$. For $A \subset G$ and $\rho > 0$, $A^{(\rho)}$ stands for the $\rho$-neighborhood of $A$ and $\NR{A}$ stands for the covering number of $A$ by $\rho$-balls, i.e.
\[\NR{A} = \min \bigl\{ N \in \N \mid \exists \, x_1,\dotsc, x_N \in G, A \subset \bigcup_{i=1}^N B_G(x_i,\rho)\bigr\}.\]
The same notation is used for other metric spaces.

\subsection{Sum-product theorem in representations of Lie groups}
In the first part of this paper, we study the sum-product phenomenon in representations of Lie groups.
We shall work with some linear representation of $G$ over some finite-dimensional real vector space $V$, endowed with some norm.
We shall also refer to
representations of $G$ as $G$-modules. For $A\subset G$, $X\subset V$ and $s\geq 1$, we denote by $\bracket{A,X}_s$ the set of elements in $V$ that can be obtained as combinations of sums, differences and products of at most $s$ elements from $A$ and $X$.

Note that the distance on $G$ induces a natural distance on each of its quotients.
Let $N \lhd G$ be a closed normal subgroup. We denote by $\pi_{G/N} \colon G \to G/N$ the canonical projection. Then there is a unique distance on $G/N$ satisfying $\forall x, y \in G,\ d(\pi_{G/N}(x), \pi_{G/N}(y)) = d(x^{-1}y,N)$. Throughout this paper, all quotients $G/N$ will be endowed with this distance.

Following~\cite{saxce_producttheorem}, we say that a subset $A \subset G$ is \emph{$\rho$-away from closed connected subgroups} for some parameter $\rho > 0$ if for any proper closed connected subgroup $H < G$, there exists $a \in A$ with $d(a,H) > \rho$.
Similarly, we say that a subset $X \subset V$ is \emph{$\rho$-away from submodules} if for any proper $G$-submodule $W < V$, there exists $x \in X$ with $d(x,W) > \rho$.

We say that a $G$-module $V$ is in $\cP(G)$ if the trivial representation does not appear as a simple quotient in the Jordan-Hölder decomposition of $G$ -- see Definition~\ref{pg}.
\begin{theorem}[Sum-product theorem in representations of class $\cP$]
\label{spclasspi}
Let $G$ be a connected real Lie group and $V\in\cP(G)$. There exists a neighborhood $U$ of the identity in $G$ such that,
for every $\eps_0,\kappa>0$,
there exist $s\geq 1$ and $\eps>0$ such that the following holds for any $\delta>0$ sufficiently small.
Assume $A \subset U$ and $X \subset B_V(0,1)$ satisfy:
\begin{enumerate}
\item For any proper closed connected normal subgroup $N \lhd G$,
\[\forall \rho\geq\delta,\ N(\pi_{G/N}(A),\rho) \geq \delta^\eps\rho^{-\kappa};\]
\item $A$ is $\delta^\eps$-away from closed connected subgroups;
\item $X$ is $\delta^\eps$-away from submodules.
\end{enumerate}
Then,
\[B_V(0,\delta^{\eps_0}) \subset \bracket{A,X}_s^{(\delta)}.\]
\end{theorem}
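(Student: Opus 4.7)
My approach is to prove Theorem~\ref{spclasspi} by induction on $\dim V$, using two main tools: the product theorem in $G$ of~\cite{saxce_producttheorem}, which under hypotheses~(i) and~(ii) ensures that some iterated product set $A^{s_0}$ is $\delta^{\eps_1}$-dense in a neighborhood of the identity of $G$, and a discretized sum-product estimate on $\R$ applied at the linearized level through the action of $\Lie(G)$ on $V$.

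\emph{Base case: $V$ is a simple nontrivial $G$-module.} The only proper submodule is $\{0\}$, so hypothesis~(iii) supplies $x_0\in X$ with $\|x_0\|>\delta^\eps$. Since $V$ is simple and $x_0\ne 0$, the orbit $G\cdot x_0$ spans $V$; equivalently, $\g\cdot x_0$ together with $x_0$ spans $V$. Using the $\delta^{\eps_1}$-density of $A^{s_0}$ near $e$, elements of $\bracket{A,X}_s$ of the form $a\cdot x_0 - x_0$ with $a\in A^{s_0}$ close to $e$ provide a $\delta^{O(\eps_1)}$-dense subset of a neighborhood of $0$ in $\g\cdot x_0$. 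Combined with sums, further action of $A$, and a sum-product argument on $\R$ to achieve arbitrary real rescalings, one fills $B_V(0,\delta^{\eps_0})$.

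\emph{Inductive step.} Pick a simple $G$-submodule $W\subset V$; since $V\in\cP(G)$, both $W$ and $V/W$ belong to $\cP(G)$, and $V/W$ has strictly smaller dimension. Let $\pi_W\colon V\to V/W$ denote the projection. Hypotheses~(i)--(iii) for $(A,X)$ in $V$ imply the analogous hypotheses for $(A,\pi_W(X))$ in $V/W$: a submodule $W'\subsetneq V/W$ whose $\delta^{\eps'}$-neighborhood contained $\pi_W(X)$ would lift to a submodule $\pi_W^{-1}(W')\subsetneq V$ violating hypothesis~(iii) for $X$. The inductive hypothesis then gives
\[B_{V/W}(0,\delta^{\eps_2})\subset\bracket{A,\pi_W(X)}_{s_2}^{(\delta)}.\]
Lifting to $V$, one obtains a set $Y\subset\bracket{A,X}_{s_2}$ whose $\pi_W$-projection is $\delta$-dense in that ball. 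Differences $y-y'$ with $\|\pi_W(y-y')\|<\delta$ then yield a subset $Z\subset W^{(\delta)}\cap\bracket{A,X}_{2s_2}$.

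To finish, one applies the base case to the simple module $W$ with $Z$ in place of $X$; since $W$ is simple, this only requires $Z$ to contain an element of norm $>\delta^{\eps'}$. Establishing this is the main obstacle, because a priori all these differences could collapse near $0$ in $W$. The fix should be a pivoting argument exploiting that the action of $G$ on $W$ is nontrivial and $A$ is non-concentrated in $G$: for a generic $x\in X$ with $\pi_W(x)\ne 0$ and a suitable $a\in A^{s_0}$, the vector $(a-\Id)\cdot x\in\bracket{A,X}_{s_0+1}$ has a non-negligible $W$-component, producing an element of $Z$ of the required size. Once $Z$ is validated as input for $W$, the base case yields $B_W(0,\delta^{\eps_0})\subset\bracket{A,Z}_{s_3}^{(\delta)}$, and combining with the expansion in $V/W$ gives the desired $B_V(0,\delta^{\eps_0})\subset\bracket{A,X}_s^{(\delta)}$.
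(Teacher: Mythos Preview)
Your overall inductive scheme (base case irreducible, inductive step via a simple submodule and the quotient) matches the paper's, but the base case is built on a tool you cannot use here. You invoke the product theorem of~\cite{saxce_producttheorem} to obtain $\delta^{\eps_1}$-density of $A^{s_0}$ near $e$, yet that result is for \emph{simple} Lie groups, whereas Theorem~\ref{spclasspi} is stated for an arbitrary connected $G$ admitting a representation in $\cP(G)$. Already for $G=\R^*$ acting by scalars on $V=\R$ (the very example mentioned after the theorem), $G$ is not perfect and no product theorem holds in $G$; the conclusion you want is Bourgain's discretized sum-product, which your scheme is supposed to \emph{recover}, not assume. Worse, the extension of the product theorem to perfect $G$ is the paper's Theorem~\ref{producttheorem0}, which is \emph{deduced from} Theorem~\ref{spclasspi}, so invoking it would be circular. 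The paper's base case instead feeds $\pi_V(A)\subset\End(V)$ directly into the sum-product theorem in simple algebras~\cite{he_sumproduct}; no density of $A^{s_0}$ in $G$ is ever obtained or needed.

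In the inductive step you correctly isolate the main obstacle---producing a vector in $W^{(\delta)}$ of size $\geq\delta^{\eps'}$---but the proposed ``pivoting argument'' does not do the job. The heuristic that $(a-\Id)\cdot x$ has a non-negligible $W$-component for generic $a,x$ has no quantitative content: if $X$ happens to lie $\delta^{\eps_2}$-close to a complementary submodule $W'$ with $V=W\oplus W'$, then \emph{every} $(a-\Id)\cdot x$ has $W$-component $O(\delta^{\eps_2})$, and nothing in hypotheses (i)--(iii) forbids this (hypothesis~(iii) only says $X$ is not $\delta^\eps$-close to a \emph{proper} submodule). The paper resolves this by a dichotomy: either a large vector in $V_1^{(\delta)}$ exists, or $X$ is forced, via a \L{}ojasiewicz argument (Lemma~\ref{inductionstep}), to lie near a genuine submodule $W$ complementary to $V_1$; one then re-runs the induction and the base case on $V/W$ to produce enough vectors that this second alternative eventually contradicts~(iii). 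A further subtlety you skip is that a ball in $V/W$ and a ball in $W$ do not combine into a ball in $V$ unless the lifts to $V$ have controlled norm; the paper's ``first step'' handles exactly this.
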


This is a bounded generation statement and can be used to recover previous sum-product results in rings.
For example, applying to $G = \R^*$ acting on $V =\R$, one recovers a version Bourgain's discretized sum-product theorem~\cite{Bourgain2003,Bourgain2010}.

In Section 2, Theorem~\ref{spclasspi} will be proved in a more precise form where the conditions (i) and (ii) can be slightly relaxed. See Theorem~\ref{spclassp}.

\subsection{Product theorem in perfect Lie groups}
In the second part of this paper, we use Theorem~\ref{spclasspi} to derive a product theorem in perfect Lie groups.
For subsets $A,B \subset G$ of a Lie group $G$, we denote by $AB$ their product set, i.e.
\[AB = \{ab \mid a \in A, b \in B\}.\]
For $k \geq 2$, we denote by $A^k$ the $k$-fold product set of $A$ with itself, $A \dotsm A$. To avoid confusion with Cartesian products between sets, we write $A^{\times k}$ for the Cartesian power $A \times \dotsm \times A$.  

Recall that a Lie group is perfect if its Lie algebra $\g$ is perfect, i.e. satisfies $[\g,\g] = \g$. 

\begin{theorem}[Product theorem in perfect Lie groups]
\label{producttheorem0}
Let $G$ be a connected perfect Lie group.
There exists a neighborhood $U$ of the identity in $G$ such that given $\kappa > 0$, there exists $\eps>0$ such that the following holds for $\delta>0$ sufficiently small.
Let $A$ be a subset of $U$ such that:
\begin{enumerate}
\item $\ND{A} \leq \delta^{-\dim G + \kappa}$;
\item for any proper closed connected normal subgroup $N \lhd G$,
\[\forall \rho\geq\delta,\ N(\pi_{G/N}(A),\rho) \geq \delta^\eps\rho^{-\kappa};\]
\item $A$ is $\delta^\eps$-away from closed connected subgroups in $G$.
\end{enumerate}
Then
\[\ND{AAA} \geq \delta^{-\eps}\ND{A}.\]
\end{theorem}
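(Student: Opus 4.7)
The plan is to proceed by contradiction, combining non-commutative Plünnecke--Ruzsa estimates with Theorem~\ref{spclasspi} applied to a well-chosen piece of the adjoint representation.

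\emph{Setup and Ruzsa side.} Suppose $\ND{AAA} < \delta^{-\eps}\ND{A}$ for a parameter $\eps$ to be fixed later. Non-commutative metric Ruzsa--Tao calculus then yields, for any word $w$ of length at most $n$ in $A \cup A^{-1}$,
\[
N(w(A), \delta) \leq \delta^{-C_n \eps}\ND{A} \leq \delta^{-\dim G + \kappa - C_n \eps},
\]
where $C_n$ depends on $n$ and $\dim G$; the second inequality uses hypothesis (i). This is the upper-bound side of the eventual contradiction.

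\emph{Sum-product side.} Since $G$ is perfect, $[\g,\g] = \g$, so $\g$ admits no trivial simple quotient at the top of any $\Ad$-equivariant Jordan--Hölder filtration. A preliminary reduction (by induction on $\dim G$, using hypothesis (ii) on the quotient by the connected normal subgroup corresponding to the hypercenter of $\g$) reduces the problem to the case where $\g \in \cP(G)$. Fix a basepoint $a_0 \in A$ and let $X = \log(a_0^{-1} A) \subset B_\g(0, O(1))$. The hypotheses of Theorem~\ref{spclasspi} for the pair $(A, X)$ are verified as follows: (i) and (ii) for $A$ match hypotheses (ii) and (iii) of the product theorem; for (iii), $G$-submodules of $\g$ are ideals, hence Lie algebras of proper closed connected normal subgroups of $G$, and the bi-Lipschitz character of $\exp$ near the identity converts the away-from-subgroups property of $A$ into the away-from-submodules property of $X$. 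Choosing $\eps_0 < \kappa/\dim G$, Theorem~\ref{spclasspi} then produces $s$ and $\eps$ such that
\[
B_\g(0, \delta^{\eps_0}) \subset \bracket{A, X}_s^{(\delta)}.
\]

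\emph{Translation and contradiction.} A generic element of $\bracket{A, X}_s$ has the form $\sum_{i=1}^{s} \pm\Ad(g_i) \log(a_0^{-1} a_i)$ with $g_i$ a word of length at most $s$ in $A \cup A^{-1}$ and $a_i \in A$. Applying $\exp$ and iterating BCH, such a sum corresponds, up to an additive error of size $O(\delta)$, to a bounded-length word in $A \cup A^{-1}$ inside $G$. Hence $B_G(e, \delta^{\eps_0})$ is covered at scale $\delta$ by such a word-set $w(A)$, and
\[
\delta^{-(1-\eps_0)\dim G} \;\asymp\; N(B_G(e, \delta^{\eps_0}), \delta) \;\leq\; N(w(A), \delta) \;\leq\; \delta^{-\dim G + \kappa - C_n \eps}.
\]
Comparing exponents, with $\eps_0 < \kappa/\dim G$ and $\eps$ chosen small enough (depending on $s$ and $n$), this is a contradiction.

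The main obstacle will be the quantitative BCH translation in the final step: realising each element of the abstract linear set $\bracket{A, X}_s \subset \g$ as a short product in $G$ with error controlled at scale $\delta$ requires careful bookkeeping on the iterated commutators produced by BCH and on their expression as bounded-length words in $A \cup A^{-1}$. A secondary, and also delicate, difficulty is the preliminary reduction to $\g \in \cP(G)$, which must handle any trivial subrepresentation of $\g$ (a ``central'' part inaccessible to the sum-product theorem) via a central-extension argument using hypothesis (ii) to control the lift from the smaller perfect quotient.
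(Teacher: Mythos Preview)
Your overall architecture --- apply the sum-product theorem to the adjoint action on $\g$, exponentiate via Baker--Campbell--Hausdorff, and close with Ruzsa --- is exactly the paper's, and your contradiction packaging is equivalent to the paper's route through the bounded-generation statement Theorem~\ref{producttheorem}. But the reduction to $\g \in \cP(G)$ by passing to the quotient by the hypercenter does not work. There exist perfect Lie algebras with trivial center (hence trivial hypercenter) whose adjoint representation is not in $\cP(G)$: take $\s = \mathfrak{sl}(2,\R)$ acting on the free $3$-step nilpotent Lie algebra $\rf$ on two generators through the standard representation on $\rf/[\rf,\rf]$. Then $\rf_2/\rf_3$ is the trivial one-dimensional $\s$-module (spanned by $[x,y]$) while $\rf_3$ is the standard two-dimensional module; one checks that $\g = \s \ltimes \rf$ is perfect and has $\mathfrak{z}(\g)=0$, yet the ideal $\rf_2$ satisfies $[\g,\rf_2] = \rf_3 \subsetneq \rf_2$, giving a trivial Jordan--H\"older factor, so $\g \notin \cP(G)$. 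Your induction has nothing to quotient by.

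The paper sidesteps this: it only invokes the sum-product theorem when the radical is \emph{abelian}, where Lemma~\ref{abelianextension} guarantees $\g \in \cP(G)$, and then runs an induction on the nilpotency class $\ell$ of the radical $R$, quotienting by the last term $R_\ell$ of its descending central series (nontrivial whenever $\ell \ge 2$) and recovering the $R_\ell$-direction via the elementary commutator estimate $B_{R_\ell}(1,\rho^2) \subset [B_R(1,\rho), B_{R_{\ell-1}}(1,\rho)]^k$ of Lemma~\ref{lm:BRl1}. A second, smaller but genuine gap: your set $X = \log(a_0^{-1}A)$ lies in a ball of radius $O(1)$, so the BCH remainder at any fixed truncation order is $O(1)$, not $O(\delta)$. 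The paper takes instead $X = \log\bigl(A^{-1}A \cap B_G(1,\delta^\eps)\bigr)$, so that each summand has size at most $\delta^\eps$ and choosing truncation order $\ell > 1/\eps$ drives the error below $\delta$ (Lemma~\ref{lm:BCHnormed}); one then has to re-verify that this shrunken $X$ is still away from submodules, which is done using the non-concentration hypothesis on projections to simple factors.
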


For $G = \SU(2)$, the above theorem was proved by Bourgain-Gamburd~\cite{BourgainGamburd_SU2}, and for a general simple Lie group by the second author~\cite{saxce_producttheorem}, borrowing many ideas from the work of Bourgain and Gamburd \cite{BourgainGamburd_SUd} on the spectral gap property in $\SU(d)$.

It is not difficult to see that the assumption of perfectness is optimal for such a product theorem to hold, in the following sense.

\begin{proposition}
\label{perfectisoptimal}
Let $G$ be a simply connected Lie group which is not perfect with Lie algebra $\g$. Write $d = \dim \g - \dim [\g,\g]$. For any neighborhood $U$ of the identity in $G$, for any $\kappa \in {(0, 1)}$ and for any $\delta > 0$ small enough, there exists $A \subset U$ such that
\begin{enumerate}
\item $\ND{A} \approx_U \delta^{-\dim G + d(1-\kappa)}$;
\item for any proper closed connected normal subgroup $N \lhd G$,
\[\forall \rho\geq\delta,\ N(\pi_{G/N}(A),\rho) \gg_U \rho^{-\kappa};\]
\item $A$ is $\frac{1}{O_U(1)}$-away from closed connected subgroups in $G$.
\end{enumerate}
but
\[\ND{AAA} \ll_U \ND{A}.\]
\end{proposition}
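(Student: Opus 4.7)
The plan is to construct $A$ explicitly as the product of a small ball in the derived subgroup $H$ with the lift of a lattice in the abelianization $G/H \cong \R^d$. Since $G$ is simply connected and $\g/[\g,\g]$ is abelian of dimension $d$, the abelianization map $\g \to \g/[\g,\g]$ integrates to a surjective Lie group homomorphism $\pi \colon G \to \R^d$ whose kernel $H$ is a closed connected normal subgroup with Lie algebra $[\g,\g]$. Since $\R^d$ is contractible, $\pi$ admits a smooth section $s \colon \R^d \to G$ with $s(0) = e$. Choose $r_H, r_V > 0$ (depending on $U$) small enough that
\[A := (H \cap B_G(e, r_H)) \cdot s(\Lambda) \subset U, \qquad \Lambda := \delta^\kappa \Z^d \cap B_{\R^d}(0, r_V).\]

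Condition (i) follows from the observation that the slabs $(H \cap B_G(e, r_H)) \cdot s(v)$, for distinct $v \in \Lambda$, project under $\pi$ to points at pairwise distance $\geq \delta^\kappa$ and are therefore $\delta^\kappa$-separated in $G$; hence $\ND{A} \approx_U |\Lambda| \cdot \ND{H \cap B_G(e, r_H)} \approx_U \delta^{-d\kappa - \dim H} = \delta^{-\dim G + d(1-\kappa)}$. For the product bound, normality of $H$ together with the boundedness of the section cocycle $c(v_1, v_2) := s(v_1) s(v_2) s(v_1+v_2)^{-1} \in H$ and of $\Ad(s(v))$ on compact sets yields $AAA \subset (H \cap B_G(e, r_H')) \cdot s(\Lambda + \Lambda + \Lambda)$ for some $r_H' = O_U(1)$; since $|\Lambda + \Lambda + \Lambda| \leq 3^d |\Lambda|$, this gives $\ND{AAA} \ll_U \ND{A}$.

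Condition (ii) splits on whether the proper normal subgroup $N \lhd G$ contains $H$. If $H \subseteq N$, then $\pi_{G/N}$ factors through $\pi$, and $\pi_{G/N}(A)$ is the image of $\Lambda$ under a $1$-Lipschitz linear surjection onto $\R^{d'}$ with $d' \geq 1$; since $\Lambda$ is a $\delta^\kappa$-net in $B_{\R^d}(0,r_V)$, its image is a $\delta^\kappa$-net in a bounded ball of $\R^{d'}$, and a routine case analysis at $\rho \leq \delta^\kappa$ (covering $\approx \delta^{-d'\kappa}$) and $\rho \geq \delta^\kappa$ (covering $\approx \rho^{-d'}$) yields $N(\pi_{G/N}(A), \rho) \gg_U \rho^{-\kappa}$, using $d' \geq 1 > \kappa$. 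If $H \not\subseteq N$, then $\pi_{G/N}(H \cap B_G(e, r_H))$ contains a Euclidean ball of radius $\Omega_U(r_H)$ in the positive-dimensional connected subgroup $HN/N < G/N$, which furnishes the same lower bound by the standard ball-covering estimate.

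For (iii), note first that $A$ is not contained in any proper closed connected subgroup $H' < G$: writing $\h' := \Lie(H')$, if $\h' \not\supseteq \h$, then $H \cap B_G(e, r_H) \subset A$ is a neighborhood of the identity in $H$ not contained in $H'$; if $\h' \supseteq \h$, then $\pi(H')$ is a proper subspace of $\R^d$ and $\Lambda$ contains a point $v$ with $d_{\R^d}(v, \pi(H')) \geq r_V/2$, so $s(v) \in A$ satisfies $d_G(s(v), H') \geq r_V/2$. The function $\h' \mapsto \sup_{a \in A} d_G(a, H')$ is a strictly positive continuous function on the compact subvariety of proper Lie subalgebras of $\g$ inside $\bigsqcup_{k < \dim \g} \Grass_k(\g)$, hence is bounded below by some $c = c(U) > 0$. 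The main delicate point is this uniform lower bound: although the $H$-ball contribution alone may vanish as $\h'$ approaches a subalgebra containing $\h$, the $s(\Lambda)$-lift contribution becomes bounded below by continuity in that limit, so their combined supremum stays strictly positive throughout the compact parameter space.
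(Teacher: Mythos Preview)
Your construction is correct and parallel to the paper's, but the paper's execution is a bit cleaner. Instead of choosing a section $s$ and setting $A=(H\cap B_G(e,r_H))\cdot s(\Lambda)$, the paper takes $A=B_G(1,2r)\cap\pi^{-1}(P)$ directly, with $P$ your lattice $\Lambda$; the two sets are comparable and both yield (i) and the tripling bound by the same mechanism. The real divergence is in (ii) and (iii): the paper observes once that $A$ is $\delta^\kappa$-dense in the fixed ball $B_G(1,r)$, and this single line gives (ii) for every quotient (the image of a $\delta^\kappa$-net in a ball is a $\delta^\kappa$-net in the image ball) and reduces (iii) to the $\delta$-independent fact that no proper closed connected subgroup is $c(G,r)$-dense in $B_G(1,r)$. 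Your case-split for (ii) is fine, just heavier.

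Your compactness argument for (iii), however, has a real gap. First, $A$ depends on $\delta$ through $\Lambda$, so compactness on the Grassmannian only gives a $\delta$-dependent infimum, not the uniform $c(U)$ you assert. Second, and more seriously, the claimed continuity of $\h'\mapsto\sup_{a\in A}d_G(a,H')$ on the variety of Lie subalgebras is not justified: even in simply connected groups, connected subgroups need not be closed (an irrational one-parameter subgroup in $\SU(2)\times\SU(2)\times\R$ gives a legitimate non-perfect example), and since $d(a,H')=d(a,\overline{H'})$, the map $\h'\mapsto d(a,H')$ is only upper semicontinuous --- the wrong direction for extracting a positive infimum over a compact. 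Both issues disappear if you note that your $A$ is $O(\delta^\kappa)$-dense in the fixed open set $(H\cap B_G(e,r_H))\cdot s(B_{\R^d}(0,r_V))$, which contains a ball $B_G(e,r')$, and then argue as the paper does from the $\delta$-free statement that no proper closed connected subgroup is $c(G,r')$-dense in $B_G(e,r')$.
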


Note that in a closely related setting, Salehi-Golsefidy and Varj\'u~\cite{GolsefidyVarju} have already observed that perfectness is a sufficient and necessary condition for an expansion result to hold.
In fact, at different places, our arguments share some conceptual similarities with the recent work of Salehi Golsefidy \cite{sg_1,sg_2} on super-approximation.
Also, these examples of approximate subgroups can be seen as discretized analogues of measurable subgroups of intermediate dimension whose existence is known in abelian Lie groups~\cite{ErdosVolkmann} and solvable Lie groups~\cite{Saxce_nilpotent}.

We shall prove Theorem~\ref{producttheorem0} and Proposition~\ref{perfectisoptimal} in Section~\ref{sec:perfect}.

\subsection{Applications}

We conclude this introduction by mentioning several applications to Theorems~\ref{spclasspi} and \ref{producttheorem0} above.
The first is that the spectral gap property discovered by Bourgain and Gamburd~\cite{BourgainGamburd_SU2,BourgainGamburd_SUd} in the setting of $\SU(d)$, $d\geq 2$ generalizes to all compact semisimple Lie groups.

\begin{theorem}
Let $G$ be a compact connected semisimple Lie group, with Lie algebra $\g$, and write $L^2_0(G)$ for the space of zero-mean square-integrable functions on $G$.
Let $\mu$ be a probability measure on $G$ whose support generates a dense subgroup in $G$.
Assume moreover that in some basis for $\g$, for every $g\in\Supp\mu$, the matrix of $\Ad g$ has algebraic entries.
Then the convolution operator
\[ \begin{array}{cccc}
T_\mu: & L^2_0(G) & \to & L^2_0(G)\\
& f & \mapsto & f*g
\end{array} \]
satisfies $\|T_\mu\|<1$.
\end{theorem}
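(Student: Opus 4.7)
The strategy is to adapt the Bourgain--Gamburd method, using Theorem~\ref{producttheorem0} as the replacement for the product theorem that was previously available only in the simple case. The goal is to prove a geometric decay of the form $\norm{\mu^{*n} - m_G}_2 \leq e^{-cn}$ at scale $\delta = e^{-n}$, where $m_G$ denotes normalized Haar measure on $G$. By a standard high-multiplicity argument on the Peter--Weyl decomposition $L^2_0(G) = \bigoplus_\rho V_\rho^{\oplus \dim V_\rho}$, together with polynomial growth of $\sum_{\dim V_\rho \leq D}\dim V_\rho^2$ in $D$ for compact semisimple $G$, such a decay yields $\norm{T_\mu} < 1$.

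\textbf{Step 1: Diophantine non-concentration.} The algebraicity of the entries of $\Ad g$ for $g \in \Supp\mu$ forces every matrix entry appearing in the walk to lie in a fixed number field $K$ of bounded degree. A Liouville-type estimate then gives, for some $\tau > 0$ and $\delta = e^{-n/C}$,
\[ \mu^{*n}\bigl(H^{(\delta^\tau)}\bigr) \leq \delta^\tau \]
for every proper closed connected subgroup $H < G$. Combining this with the density of the group generated by $\Supp\mu$, one obtains an analogous non-concentration statement for the pushforwards $(\pi_{G/N})_*\mu^{*n}$ in every quotient by a proper normal subgroup, matching condition (ii) of Theorem~\ref{producttheorem0}.

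\textbf{Step 2: $L^2$-flattening via Balog--Szemerédi--Gowers.} Fix $\kappa > 0$ small and suppose, for contradiction, that $\norm{\mu^{*n}}_2^2 \geq \delta^{\dim G - \kappa}$ for infinitely many $n$. A non-commutative Balog--Szemerédi--Gowers argument (at scale $\delta$) then extracts a subset $A \subset U$ with $\ND{A} \asymp \norm{\mu^{*n}}_2^{-2}$ and
\[ \ND{AAA} \leq \delta^{-\eps'}\ND{A}. \]
The Diophantine bounds from Step 1 transfer to $A$, verifying hypotheses (i)--(iii) of Theorem~\ref{producttheorem0}. Since a semisimple Lie group is perfect, that theorem delivers $\ND{AAA} \geq \delta^{-\eps}\ND{A}$, contradicting the previous inequality for $\eps > \eps'$. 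Hence $\norm{\mu^{*n}}_2^2 \leq \delta^{\dim G - \kappa}$ for all $n$ large enough, which is precisely the geometric decay needed in Step~0 to conclude the spectral gap.

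\textbf{Main obstacle.} The heart of the argument lies in Step 2, and more specifically in ensuring that the approximate subgroup $A$ produced by the Balog--Szemerédi--Gowers extraction inherits the full non-concentration profile of $\mu^{*n}$ in \emph{every} quotient $G/N$, not only the bound in the ambient group. This requires running the extraction multiscale, propagating the Diophantine estimate of Step~1 uniformly through the lattice of connected normal subgroups of $G$; the rest of the argument then plugs directly into Theorem~\ref{producttheorem0}.
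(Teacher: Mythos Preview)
The paper does not actually contain a proof of this theorem: it is stated in Subsection~1.3 (Applications) as a consequence of Theorem~\ref{producttheorem0}, with the implicit understanding that one plugs the new product theorem into the Bourgain--Gamburd machine of \cite{BourgainGamburd_SU2,BourgainGamburd_SUd,saxce_producttheorem}. Your sketch is exactly that machine, so in spirit you are doing precisely what the paper intends.

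A couple of comments on the sketch itself. First, the flattening argument in Step~2 is not quite phrased correctly: one does not argue by contradiction on $\norm{\mu^{*n}}_2^2 \geq \delta^{\dim G - \kappa}$ directly, but rather shows that if a single convolution fails to flatten, i.e.\ $\norm{\mu^{*2n}*P_\delta}_2 > \delta^{\eps'}\norm{\mu^{*n}*P_\delta}_2$, then the (non-commutative, discretized) Balog--Szemer\'edi--Gowers lemma produces the approximate group $A$; iterating the flattening then brings $\norm{\mu^{*n}*P_\delta}_2$ down to the Haar scale in $O(\log(1/\delta))$ steps. Second, the ``main obstacle'' you flag is somewhat lighter than you suggest for compact semisimple $G$: the only proper connected normal subgroups are products of simple factors, and by Theorem~\ref{producttheorem} (the refined form) one only needs non-concentration in each simple factor $\pi_i(A)$, which the Diophantine input of Step~1 supplies factor by factor. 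The genuinely delicate point, which your Step~1 passes over, is the escape-from-subgroups estimate $\mu^{*n}(H^{(\delta^\tau)}) \leq \delta^\tau$ for \emph{all} proper closed connected $H$: in the simple case this is handled in \cite{saxce_producttheorem} via an effective Nullstellensatz / \L ojasiewicz argument on the variety of subalgebras, and the same argument goes through in the semisimple case, but it is not a one-line Liouville bound.
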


The local spectral gap property introduced by Boutonnet, Ioana and Salehi Golsefidy in \cite{bisg} for non-compact Lie groups can also be generalized to a general connected perfect Lie group, but in order to keep statements as elementary as possible, we do not make this precise here.

\bigskip

Originally, discretized expansion statements were introduced by Katz and Tao \cite{katztao} and used by Bourgain \cite{Bourgain2003} to study Hausdorff dimensions of sum-sets and product-sets in $\R$ and give a quantitative solution to the Erd\H{o}s-Volkmann conjecture: If $A$ is any Borel measurable subset of $\R$ with $\dim_HA\in(0,1)$, then $\dim_HA+AA>\dim_HA$.
The theorems proven here have similar consequences on the Haudorff dimension of sum and product sets in semi-simple algebras or perfect Lie groups.
In particular, it should be possible to generalize the results of the second author presented in \cite{saxce_simplesubgroups} to the setting of perfect Lie groups; we hope to address these matters in another paper.

\bigskip

Another nice application of Theorem~\ref{spclasspi} is the very recent work of Li \cite{li_fourierdecayrn, li_fourierdecayfurstenberg} on the regularity of the Furstenberg measure associated to a random walk on a semisimple Lie group.

\bigskip

Finally, it is possible that our results could be used to construct new family of expanders, in the spirit of the works of Bourgain-Yehudayoff~\cite{bourgain-yehudayoff} or Vigolo~\cite{vigolo_expanders}.



\section{Sum-product theorem in representations of $G$}

The goal of this section is to prove Theorem~\ref{spclasspi} from the introduction.
In fact, our proof will yield a slightly more precise version, see Theorem~\ref{spclassp}.

\subsection{Representations without trivial simple quotients}

We now define the class of representations to which our theorem will apply, and gather some elementary properties.
Then, we state the refined version of Theorem~\ref{spclasspi} which will be proved here, Theorem~\ref{spclassp}.

\begin{definition}
\label{pg}
Let $G$ be a connected Lie group.
We let $\cP(G)$ denote the class of finite-dimensional linear representations $V$ of $G$ such that
there exists a sequence $\{0\}=V_0<V_1<\dots<V_\ell=V$ of subrepresentations of $V$ such that, for each $i=0,\dotsc,\ell-1$, the quotient representation $V_{i+1}/V_i$ is \emph{non-trivial} and irreducible.
\end{definition}

Equivalently, $V$ is in $\cP(G)$ if the trivial representation does not appear as a simple quotient in a Jordan-Hölder decomposition of $V$.
This property, of course, does not depend on the choice of the Jordan-Hölder decomposition.
We now list some elementary properties of representations in $\cP(G)$.

\begin{proposition}
\label{pr:subquo}
Let $V$ be a representation of a connected Lie group $G$.
\begin{enumerate}
\item \label{it:subquo} If $W$ is a subrepresentation of $V$, then $V$ belongs to $\cP(G)$ if and only if both $W$ and $V/W$ belong to $\cP(G)$.
\item \label{it:asHrep} If $H$ is a closed subgroup of $G$ and $V \in \cP(H)$ as a representation of $H$, then $V \in \cP(G)$.
\item \label{it:asGmodHrep} Let $H$ be a normal subgroup of $G$.
If the representation $G \to \GL(V)$ factors through $G/H$, then $V \in \cP(G/H)$ as a representation of $G/H$ if and only if $V \in \cP(G)$ as a representation of $G$.
\end{enumerate}
\end{proposition}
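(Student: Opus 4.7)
The plan is to derive all three assertions directly from the Jordan--Hölder theorem, exploiting the fact that the trivial representation is one-dimensional and therefore automatically simple as a module over any group.

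For (i), I will rely on Schreier refinement. Starting with any JH series of $V$ that passes through $W$ (obtained by refining the two-step filtration $0 < W < V$), the factors below $W$ constitute a JH series of $W$, while the factors above $W$ descend to a JH series of $V/W$. Conversely, given JH series of $W$ and of $V/W$, pulling back the second along the canonical projection and concatenating with the first yields a JH series of $V$. The multiset of simple subquotients of $V$ is therefore the disjoint union of those of $W$ and of $V/W$, so the trivial representation fails to appear among the factors of $V$ if and only if it fails to appear among the factors of both $W$ and $V/W$.

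For (ii), I will argue by contradiction. Assume $V \in \cP(H)$ and pick a JH series $\{0\} = V_0 < V_1 < \dots < V_\ell = V$ of $V$ as a $G$-module. If some simple quotient $V_{i+1}/V_i$ were the trivial $G$-module, it would be one-dimensional with trivial $H$-action, hence itself a trivial and irreducible $H$-module. Refining each $G$-factor of the series into a JH series of $H$-modules would then produce a JH series of $V$ as an $H$-module containing a trivial simple factor, contradicting $V \in \cP(H)$.

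For (iii), since the action of $G$ on $V$ factors through $G/H$, the $G$-submodules of $V$ are precisely the $G/H$-submodules, and on every subquotient the two induced actions coincide via the canonical projection. A JH series as a $G$-module is therefore the same as a JH series as a $G/H$-module, and a simple factor is trivial as a $G$-module if and only if the corresponding $G/H$-action is trivial; the equivalence follows at once. No genuine obstacle arises in this proposition; the only point that deserves emphasis, and which drives item (ii), is that a trivial module is one-dimensional and hence $H$-simple for every subgroup $H$, so the trivial factor survives passage to any $H$-refinement.
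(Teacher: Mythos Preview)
Your proof is correct and follows essentially the same line as the paper's own argument: for \ref{it:subquo} both use that the Jordan--Hölder factors of $V$ are the union of those of $W$ and of $V/W$; for \ref{it:asHrep} both refine a $G$-JH series to an $H$-JH series and observe that a trivial $G$-factor, being one-dimensional, remains a trivial irreducible $H$-factor; and for \ref{it:asGmodHrep} both note that $G$- and $G/H$-submodules coincide. Your write-up simply spells out a few more details.
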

\begin{proof}
Indeed, \ref{it:subquo} follows from the fact that the set of simple quotients of the Jordan-Hölder decomposition of $V$ is the union of those of $W$ and those of $V/W$.
For \ref{it:asHrep}, note that a Jordan-Hölder sequence of $G$-submodules in $V$ can be refined to a Jordan-Hölder sequence of $H$-submodules, and that
if there is a trivial quotient in the first sequence there must be also one in the refined sequence.
Finally, \ref{it:asGmodHrep} is clear, since Jordan-Hölder decompositions of $V$ into $G$-modules coïncide with Jordan-Hölder decompositions into $G/H$-modules.
\end{proof}

\begin{remark}
The class $\cP(G)$ is the smallest class of finite-dimensional $G$-modules that
contains all non-trivial irreducible representations of $G$ and is closed under extension (i.e., if $W$ and $V'$ are in $\cP(G)$, and $0\to W\to V\to V'\to 0$ is a short exact sequence of $G$-modules, then $V$ is in $\cP(G)$).
\end{remark}

\begin{example}
\begin{itemize}
\item If a representation $V$ contains the trivial representation, then it is not in $\cP(G)$. Similarly, if $V$ admits the trivial representation as a quotient, then it is not in $\cP(G)$.
\item Let $n$ be a positive integer. The representation of $G=\R_+^*$ on $\R^n$ given by $g\cdot v = gv$ (scalar multiplication) is in $\cP(G)$.
\item The adjoint representation of a semisimple Lie group $G$ is in $\cP(G)$.
\end{itemize}
\end{example}

Throughout this article, we shall consider representations of $G$ as normed vector spaces: By \emph{normed $G$-module}, we mean a $G$-module endowed with a norm which makes the underlying linear space a normed vector space.

Whenever $V$ is a normed vector space, and $W\leq V$ is a linear subspace, we shall alway consider on $W$ the norm induced by the norm on $V$, and on quotient space $V'=V/W$ the norm given by the formula
\[\forall v\in V,\ \norm{\pi(v)} = d(v,W),\]
where $\pi \colon V \to V'$ is the canonical projection.
Finally, we endow the space of linear endomorphisms $\End(V)$ with the associated operator norm.

Let $\rho \in(0,\frac{1}{2})$ be a parameter and $V$ a normed $G$-module, we say that a subset $X \subset V$ is \emph{$\rho$-away from submodules} if for every proper submodule $W < V$, there exists $x \in X$ such that $d(x,W) \geq \rho$.
Similarily, a subset $A\subset G$ is said to be \emph{$\rho$-away from closed connected subgroups} if for every proper closed connected subgroup $H$, there exists $a\in A$ such that $d(a,H)\geq\rho$.
Finally, a subset $A\subset G$ is said to be \emph{$\rho$-away from identity components of proper stabilizers} if for any subspace $W \subset V$ which is not a $G$-submodule, there exists $a \in A$ such that $d(a,(\Stab_GW)^\circ) \geq \rho$, where $(\Stab_GW)^\circ$ denotes the identity component of the stabilizer $\Stab_GW$ of $W$ in $G$.

We shall prove the following.

\begin{theorem}[Sum-product in representations of class $\cP$]
\label{spclassp}
Let $G$ be a connected real Lie group and $V$ a normed $G$-module. There exists a neighborhood $U$ of the identity in $G$ such that,
for every $\eps_0,\kappa>0$,
there exist $s\geq 1$ and $\eps>0$ such that the following holds for any $\delta>0$ sufficiently small.\\
Assume $A \subset U$ and $X \subset B_V(0,1)$ satisfy:
\begin{enumerate}
\item \label{it:piArich} There is a Jordan-Hölder sequence $0 = V_0 < \dotso < V_\ell = V$ such that for every $i = 0,\dotsc, \ell - 1$,
\[ \forall \rho\geq\delta, \quad N(p_{V_{i+1}/V_{i}}(A),\rho) \geq \delta^\eps\rho^{-\kappa},\]
where $p_{V_{i+1}/V_{i}} \colon G\to \GL(V_{i+1}/V_{i})$ denotes the representation of $G$ on $V_{i+1}/V_{i}$;
\item \label{it:Aaway} $A$ is $\delta^\eps$-away from identity components of proper stabilizers;
\item \label{it:Xaway} $X$ is $\delta^\eps$-away from submodules.
\end{enumerate}
Then,
\[B_V(0,\delta^{\eps_0}) \subset \bracket{A,X}_s + B_V(0,\delta).\]
\end{theorem}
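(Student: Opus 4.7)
The plan is to induct on the length $\ell$ of the Jordan-Hölder sequence provided by condition~\ref{it:piArich}, with the technical heart being the base case where $V$ is irreducible and non-trivial.

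For the base case ($\ell = 1$), I would adapt Bourgain's discretized sum-product theorem to the irreducible action of $G$ on $V$. Working in exponential coordinates near the identity, write $a = \exp(\xi) \in A$ and expand $a \cdot v = v + \xi \cdot v + O(\norm{\xi}^2 \norm{v})$, so that differences of products $a \cdot x$ for close-by $a$'s extract the bilinear Lie-algebra action $\mathfrak{g} \times V \to V$. Because $V$ is irreducible and non-trivial, the enveloping-algebra orbit of any nonzero $x \in X$ spans $V$; hypothesis~\ref{it:Aaway} together with~\ref{it:piArich} ensures quantitatively that $\log A$ is transverse to every subspace of $\mathfrak{g}$ whose one-parameter subgroups fix a non-submodule of $V$. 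One then executes a Bourgain-style iterated sum-product argument, with growth detected direction by direction, to obtain that $\bracket{A,X}_s$ covers $B_V(0,\delta^{\eps_0})$ up to an additive $\delta$-error.

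For the inductive step ($\ell \geq 2$), I would take $W := V_1$, the bottom of the given Jordan-Hölder sequence, so that $W$ is irreducible and non-trivial while $V/W$ carries a filtration of length $\ell-1$. Letting $\pi : V \to V/W$, the projected data $(A,\pi(X))$ still satisfies the hypotheses for $V/W$: the filtration condition~\ref{it:piArich} reads off modulo $W$, and conditions~\ref{it:Aaway} and~\ref{it:Xaway} descend via Proposition~\ref{pr:subquo} with at worst a constant loss in $\delta^\eps$, since submodules of $V/W$ correspond to submodules of $V$ containing $W$. By induction, $\bracket{A,\pi(X)}_{s'}$ covers $B_{V/W}(0,\delta^{\eps'_0})$ modulo $\delta$. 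To fill the fibers, I would use combinations of the form $a \cdot x - x$, or more generally higher-order sum-differences of group elements acting on $X$, to synthesize a subset $X' \subset W$ (up to $\delta$-error) which inherits the away-from-submodules property for $W$ from hypothesis~\ref{it:Xaway} applied to $V$. Applying the base case to the irreducible $W$ with the data $(A,X')$ then gives a covering of $B_W(0,\delta^{\eps''_0})$, and adding this to a lift of the covering in $V/W$ yields the claim in $V$.

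The main obstacle is the base case. The substantive difficulty there is converting the algebraic non-degeneracy in~\ref{it:Aaway} into the quantitative transversality needed to run a Bourgain-type sum-product recursion in $V$: one has to argue that at each scale the failure of multiplicative concentration of $A$, registered only through its images in the irreducible quotient, genuinely cooperates with the failure of additive concentration of $X$ in submodules to force growth in $\bracket{A,X}_s$. A secondary but significant hurdle in the inductive step is extracting the fiber subset $X' \subset W$ with the correct quantitative non-concentration property; it is precisely the strong formulation of~\ref{it:Aaway} in terms of non-submodule stabilizers (rather than merely proper closed subgroups) that makes this reduction go through.
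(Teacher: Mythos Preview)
Your inductive skeleton matches the paper's, but there are two genuine gaps, one in each half of the argument.

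For the base case, the paper does \emph{not} run a Bourgain-style recursion directly on the bilinear map $\g\times V\to V$. Instead it pushes $A$ forward by $\pi_V$ into $\End(V)$ and invokes the first author's sum-product theorem in simple algebras (Proposition~\ref{pr:sumproduct}, which rests on \cite{he_sumproduct}): one first shows, via a \L{}ojasiewicz argument (Lemma~\ref{lm:rhoOirred}), that the hypothesis~\ref{it:Aaway} forces $\pi_V(A)$ to act $\delta^{O(\eps)}$-irreducibly on $V$, and then the algebra sum-product produces a full ball in $\End(V)$, which one applies to $v$. Your sketch (``Bourgain-style iterated sum-product argument, with growth detected direction by direction'') is not obviously a proof; the algebra structure of $\End(V)$ is what makes the existing machinery apply, and you have not indicated how you would replace it.

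In the inductive step the substantive gap is in ``synthesize a subset $X'\subset W$ which inherits the away-from-submodules property''. Since $W=V_1$ is irreducible, this amounts to producing a single vector in $\bracket{A,X}_s\cap V_1^{(\delta)}$ of length $\geq\delta^{\eps_2}$, and your proposed mechanism $a\cdot x-x$ does not land in $V_1$ in general. The paper's route is a dichotomy (Lemma~\ref{inductionstep}): if no such large vector exists, then $X$ is forced to lie within $\delta^\tau$ of a genuine $G$-submodule $W'$ complementary to $V_1$; this is proved by first showing $X$ is close to the graph of an \emph{almost additive, almost $A$-equivariant} section $V/V_1\to V$, linearising it (Lemma~\ref{almostadditive}), and then using \L{}ojasiewicz's inequality together with~\ref{it:Aaway} to upgrade ``almost $A$-invariant subspace'' to ``close to a $G$-submodule''. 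One then applies the base case to the irreducible quotient $V/W'$ to escape from $W'$ and manufacture the missing large vector in $V_1$. None of this bootstrap is visible in your outline. A further point you skip is that a ball in $V/V_1$ and a ball in $V_1$ do not add up to a ball in $V$: the paper's First Step produces the quotient ball using only vectors of controlled length in $V$, precisely so that the final addition goes through.
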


Note that the assumption~\ref{it:piArich} implies that $V$ is of class $\cP(G)$.
The proof goes by induction on the length of $V$ (i.e. the length of any Jordan-Hölder decomposition of $V$).
We shall prove the base case, where $V$ is a non-trivial irreducible representation, in the next subsection.
The induction step will then be carried out in Subsection~\ref{ss:inductionstep}.

\subsection{Irreducible representations}

In the case $V$ is an irreducible representation of $G$, the above theorem is a variant of \cite[Theorem~3]{he_sumproduct}.
For clarity, we restate our theorem in this particular case.
Then, we shall explain how to derive it from the first author's sum-product theorem in simple algebras \cite[Theorem~2]{he_sumproduct}.

\begin{theorem}[Base case: irreducible representations]
\label{basecase}
Let $G$ be a connected real Lie group and $\pi_V:G\to\GL(V)$ a non-trivial irreducible representation.
There exists a neighborhood $U$ of the identity in $G$ such that, for every $\eps_0,\kappa>0$,
there exist $s\geq 1$ and $\eps>0$ such that the following holds for any $\delta>0$ sufficiently small.\\
Assume $A \subset U$ and $X \subset B_V(0,1)$ satisfy:
\begin{enumerate}
\item For every $\rho\geq\delta$, $N(\pi_V(A),\rho) \geq \delta^\eps\rho^{-\kappa}$;
\item $A$ is $\delta^\eps$-away from identity components of proper stabilizers;
\item There exists $v\in X$ such that $\|v\| \geq\delta^\eps$.
\end{enumerate}
Then,
\[B_V(0,\delta^{\eps_0}) \subset \bracket{A,X}_s + B_V(0,\delta).\]
\end{theorem}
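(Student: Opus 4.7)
The plan is to reduce Theorem~\ref{basecase} to the first author's sum-product theorem in simple real algebras, \cite[Theorem~2]{he_sumproduct}. Since $V$ is a non-trivial irreducible representation of $G$, the Burnside-Jacobson density theorem identifies the associative $\R$-algebra $\cA \subset \End(V)$ generated by $\pi_V(G)$ with $\End_D(V)$, where $D = \End_G(V)$ is a real division algebra (isomorphic to $\R$, $\C$, or $\mathbb{H}$ by Frobenius). In particular $\cA$ is a finite-dimensional simple $\R$-algebra acting irreducibly on $V$, and the strategy is to apply the simple-algebra theorem to $\pi_V(A)$ and then transfer the resulting algebra-level ball into $V$ by acting on the vector $v \in X$ provided by hypothesis~(iii).

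To verify the hypotheses on the algebra side, condition~(i) directly gives the required covering-number lower bound for $\pi_V(A) \subset \cA$. The crucial step is to show, using~(ii), that $\pi_V(A)$ is $\delta^{\eps'}$-away from every proper unital $\R$-subalgebra $\cB \subsetneq \cA$ for some $\eps' = \Omega(\eps)$. For this, to each such $\cB$ one associates the closed subgroup $H_\cB := \{g \in G : \pi_V(g) \in \cB\}$ and shows that its identity component $H_\cB^\circ$ is contained in $(\Stab_G W)^\circ$ for some proper non-zero real subspace $W$ of $V$; since $V$ is $G$-irreducible such a $W$ is automatically not a $G$-submodule, so hypothesis~(ii) yields the desired distance bound. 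Applying \cite[Theorem~2]{he_sumproduct} then produces parameters $s', \eps_0' > 0$ with
\[B_\cA(0, \delta^{\eps_0'}) \subset \bracket{\pi_V(A)}_{s'} + B_\cA(0, \delta).\]
To descend to $V$, apply the orbit map $\Phi_v \colon T \mapsto Tv$ with $v \in X$ satisfying $\|v\| \geq \delta^\eps$: by irreducibility of the $\cA$-module $V$, $\Phi_v$ is a surjective linear map $\cA \to V$, and a right inverse can be chosen with operator norm bounded by $C/\|v\|$ for a constant $C = C(V)$. This yields $B_V(0, \delta^{\eps + \eps_0'}/C) \subset \bracket{A, X}_s + B_V(0, \delta)$, which gives the conclusion after choosing $\eps$ small relative to $\eps_0$ and absorbing constants into $s$.

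The main obstacle is the inclusion $H_\cB^\circ \subset (\Stab_G W)^\circ$ for arbitrary proper subalgebras $\cB$. If $\cB$ acts reducibly on $V$, any proper non-zero $\cB$-invariant subspace $W$ works, since $\pi_V(H_\cB)(W) \subset W$. The delicate case is when $\cB$ acts irreducibly on $V$: by Jacobson density this forces $\cB = \End_{D'}(V)$ for some division algebra $D' \supsetneq D$, so no $\cB$-invariant real subspace exists inside $V$. In this situation $W$ must be produced differently, for instance by decomposing $V$ (as a real vector space) over $D'$ and selecting a real subspace whose $G$-stabilizer captures $H_\cB^\circ$, or by passing to an auxiliary representation in which the extra commutant manifests as an invariant subspace. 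Handling this case uniformly in $\cB$, for example through a compactness argument on the variety of proper unital subalgebras of $\cA$ of a given dimension, is the key technical point of this base case.
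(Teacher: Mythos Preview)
Your proposal has a genuine gap precisely at the ``delicate case'' you flag at the end. The hoped-for inclusion $H_\cB^\circ \subset (\Stab_G W)^\circ$ for some proper subspace $W$ of $V$ can fail outright when $\cB$ acts irreducibly. For a concrete example take $G = \GL_2(\R)^+$ in its standard representation on $V = \R^2$, so $D = \R$ and $\cA = M_2(\R)$; let $\cB \cong \C$ be embedded via $a+bi \mapsto \bigl(\begin{smallmatrix} a & -b\\ b & a\end{smallmatrix}\bigr)$. Then $H_\cB = \C^*$ consists of rotations and positive scalings, and since nontrivial rotations preserve no line, $H_\cB^\circ$ is not contained in the identity component of the stabilizer of any proper subspace of $V$. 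Consequently one can take $A$ to be a rich subset of $\C^*$ (so hypothesis~(i) holds for any $\kappa \leq 2$) which is $\delta^\eps$-away from every $(\Stab_G W)^\circ$ and yet has $\pi_V(A)$ lying entirely inside the proper subalgebra $\cB$. Hypothesis~(ii) simply does not prevent this, and none of your suggested remedies (auxiliary representations, compactness over the subalgebra variety) can recover the missing information, since~(ii) only speaks about stabilizers of subspaces of $V$ itself.

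The paper sidesteps this by a different reduction. Instead of aiming for the subalgebra hypothesis of \cite[Theorem~2]{he_sumproduct} directly, it shows via a \L{}ojasiewicz argument (Lemma~\ref{lm:rhoOirred}) that hypothesis~(ii) forces $\pi_V(A)$ to act $\delta^{O(\eps)}$-\emph{irreducibly} on $V$: for every proper subspace $W$ there are $a\in A$ and $w \in B_W(0,1)$ with $d(\pi_V(a)w,W) \geq \delta^{O(\eps)}$. This weaker condition, together with the covering bound~(i), is exactly the input to \cite[Proposition~31]{he_sumproduct}, which produces a $\delta^{-O(\eps)}$-bi-Lipschitz identification $V \simeq K^n$ (with $K \in \{\R,\C,\mathbb{H}\}$) and a coarser scale $\delta_1 \in [\delta,\delta^c]$ at which $\pi_V(A)$ both lies in $\Mat_n(K) + B(0,\delta_1)$ \emph{and} is $\delta_1^{\eps_1}$-away from every proper subalgebra. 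In the example above this step selects $K = \C$, $n = 1$, and it is hypothesis~(i), not~(ii), that then prevents $\pi_V(A)$ from being trapped near $\R \subset \C$ at scale $\delta_1$. One finally applies \cite[Theorem~5]{he_sumproduct} at scale $\delta_1$, acts on the vector $v$, and bootstraps back down to scale $\delta$ by repeated multiplication by a small scalar in $\bracket{\pi_V(A)}_s$ (this is the content of Proposition~\ref{pr:sumproduct}).
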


The proof of this theorem is based on Proposition~\ref{pr:sumproduct} below, a sum-product statement in matrix representations, which is essentially contained in \cite{he_sumproduct}.
Below, $A$ denotes a subset of $\End(V)$, for some real vector space $V$, and $\bracket{A}_s$ denotes the set of elements in $\End(V)$ that can be obtained as combinations of sums and products of at most $s$ elements in $A$.
If $V$ is a real vector space, if $A$ is a subset of $\End V$, and if $\rho\in(0,\frac{1}{2})$ is a parameter, we say that $A$ \emph{acts $\rho$-irreducibly} on $V$ if for every non-trivial proper linear subspace $W<V$, there exists $v\in B_W(0,1)$ and $a\in A$ such that $d(a\cdot v,W)\geq\rho$.

\begin{proposition}[Sum-product in irreducible representations]
\label{pr:sumproduct}
Let $V$ be a finite-dimensional normed vector space. Given $\eps_0,\kappa>0$, there exist $s\geq 1$ and $\eps>0$ such that the following holds.
Let $A\subset B_{\End(V)}(0,\delta^{-\eps})$ be a subset of $\End V$ and $v \in V$ a vector. Assume that
\begin{enumerate}
\item \label{it:ar} For every $\rho\geq\delta$, $N(A,\rho) \geq \delta^\eps\rho^{-\kappa}$;
\item \label{it:airr} $A$ acts $\delta^\eps$-irreducibly on $V$;
\item \label{it:vd} $\delta^\eps \leq \norm{v} \leq \delta^{-\eps}$.
\end{enumerate}
Then
\[B_V(0,\delta^{\eps_0}) \subset \bracket{A}_s \cdot v + B_V(0,\delta).\]
\end{proposition}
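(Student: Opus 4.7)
The proposition is essentially a reformulation of Theorem~3 of \cite{he_sumproduct}, and the plan is to obtain it from the sum-product theorem in simple algebras \cite[Theorem~2]{he_sumproduct} applied to the matrix algebra $\End(V)$, followed by a transfer step that converts density in $\End(V)$ into density of $\bracket{A}_s\cdot v$ inside $V$, via evaluation at the vector $v$.

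In the first step I would apply He's sum-product theorem to the bounded subset $A\subset B_{\End(V)}(0,\delta^{-\eps})$ of the simple algebra $\End(V)$. The covering-number hypothesis is provided directly by~\ref{it:ar}. The non-concentration hypothesis near proper unital subalgebras $\cA<\End(V)$ is the place where~\ref{it:airr} is used, through a quantitative Burnside-type argument: if every element of $A$ lay within $\delta^{O(\eps)}$ of such a $\cA$, then since a proper unital subalgebra of $\End(V)$ acts reducibly on $V$ by the Jacobson density theorem (taking into account the possible division-algebra commutant over $\R$), one could extract a proper subspace $W<V$ essentially preserved by every element of $A$, contradicting the $\delta^\eps$-irreducibility. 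After choosing $s$ sufficiently large in terms of $\eps_0,\kappa,\eps$, the output is that $\bracket{A}_s$ is $\delta^{1+O(\eps)}$-dense inside a ball $B_{\End(V)}(0,\delta^{\eps'})$ for some $\eps'>0$ that can be made as small as desired compared to $\eps_0$.

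For the transfer step, fix $w\in B_V(0,\delta^{\eps_0})$ and pick a linear functional $\phi\in V^*$ with $\phi(v)=1$ and $\norm{\phi}\leq\norm{v}^{-1}$. The rank-one endomorphism $T_w\colon u\mapsto \phi(u)\,w$ then satisfies $T_w v = w$ and $\norm{T_w}\leq\norm{w}\norm{v}^{-1}\leq\delta^{\eps_0-\eps}$ by~\ref{it:vd}. If $\eps$ is small enough that $\eps_0-\eps\geq\eps'$, then Step~1 produces $T\in\bracket{A}_s$ with $\norm{T-T_w}\leq\delta^{1+O(\eps)}$, and using the upper bound $\norm{v}\leq\delta^{-\eps}$ one concludes
\[\norm{Tv - w} = \norm{(T-T_w)v} \leq \norm{T-T_w}\cdot\norm{v}\leq\delta^{1+O(\eps)-\eps}\leq\delta\]
after a final shrinking of $\eps$.

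The main technical obstacle is the quantitative Burnside step in Step~1: converting the local geometric $\delta^\eps$-irreducibility into a genuine polynomial non-concentration statement near subalgebras of $\End(V)$. This is delicate because irreducibility controls only the $A$-orbits of unit vectors in proper subspaces, whereas subalgebras of $\End(V)$ are more general structures than stabilizers of subspaces (in particular over $\R$ one must handle complex and quaternionic commutants separately); carrying out this conversion with polynomial dependence on $\delta^\eps$ is really the heart of \cite{he_sumproduct}.
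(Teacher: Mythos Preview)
Your overall strategy---invoke the sum-product machinery from \cite{he_sumproduct} to fill a ball in $\End(V)$, then push forward to $V$ by evaluating at $v$---matches the paper's. The transfer step via the rank-one operator $T_w$ is fine and is equivalent to what the paper does.

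The genuine gap is in the scale you claim at the end of Step~1. You assert that $\bracket{A}_s$ becomes $\delta^{1+O(\eps)}$-dense in a ball of $\End(V)$, i.e.\ essentially at the target scale $\delta$. But the machinery in \cite{he_sumproduct} does not give this directly. Concretely, the paper first uses \cite[Proposition~31]{he_sumproduct} to find a $\delta^{-O(\eps)}$-bi-Lipschitz identification $V\simeq K^n$ (with $K\in\{\R,\C,\mathbb{H}\}$) together with an \emph{intermediate} scale $\delta_1\in[\delta,\delta^c]$ such that $A$ lies $\delta_1$-close to $\Mat_n(K)$ and is $\delta_1^{\eps_1}$-away from its proper subalgebras; only then does \cite[Theorem~5]{he_sumproduct} apply, and it yields $B_{\Mat_n(K)}(0,\delta_1^{\eps_0})\subset\bracket{A}_s+B(0,\delta_1)$. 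This is density at scale $\delta_1$, which may be as coarse as $\delta^c$ for some fixed $c<1$ depending only on $V$; your Step~2 would then only produce $B_V(0,\delta_1^{\eps_0})\subset\bracket{A}_s\cdot v+B_V(0,\delta_1)$, not the claimed inclusion at scale $\delta$.

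The paper closes this gap with an extra ingredient you are missing: since $\delta_1^{1/2}\id_V\in B_{\Mat_n(K)}(0,\delta_1^{\eps_0})\subset\bracket{A}_s+B(0,\delta_1)$, one can iteratively multiply by (an approximation to) $\delta_1^{1/2}\id_V$ to shrink both the ball and the error, obtaining $B_V(0,\delta_1^{k/2})\subset\bracket{A}_{s_k}\cdot v+B_V(0,\delta_1^{(k+1)/2})$ for increasing $k$, and then chain these inclusions until $\delta_1^{k/2}\leq\delta$. Without this scale-reduction step (or an equivalent device), your argument does not reach the stated conclusion.
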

\begin{proof}
Given $\eps_1>0$, it follows from \cite[Proposition 31]{he_sumproduct} that there exists $c>0$ such that, provided $\eps>0$ is small enough, there exists a $\delta^{-O(\eps)}$-bi-Lipschitz linear bijection $f \colon V \to K^n$, where $K$ is $\R$, $\C$ or the quaternions $\mathbb{H}$, $n$ is $\frac{\dim V}{\dim K}$ and $K^n$ is endowed with its usual $L^2$ norm, and a scale $\delta_1$ with $\delta \leq \delta_1 \leq \delta^c$ such that
\[fAf^{-1} \subset \Mat_n(K)+B(0,\delta_1)\]
and such that for every proper real subalgebra $F<\End(K^n)$,
\[ \exists a\in A: \ d(faf^{-1},F) \geq \delta_1^{\eps_1}.\]
Choosing $\eps_1$ small enough in terms of $\eps_0$ and $\kappa$, we may then apply \cite[Theorem 5]{he_sumproduct} to conclude that, provided $\eps>0$ is sufficiently small, for some integer $s$,
\[B_{\Mat_n(K)}(0,\delta_1^{\eps_0}) \subset f\bracket{A}_sf^{-1} + B_{\Mat_n(K)}(0,\delta_1).\]
Therefore, without loss of generality, we may assume that $V = K^n$ and
\begin{equation} \label{eq:MatnK}
B_{\Mat_n(K)}(0,\delta_1^{\eps_0}) \subset A + B_{\Mat_n(K)}(0,\delta_1).
\end{equation}
We can further assume that $\norm{v} = 1$. Then
\[B_V(0,\delta_1^{\eps_0}) \subset A\cdot v + B_V(0,\delta_1).\]
In other words, the conclusion of the proposition holds at scale $\delta_1$.
It remains to bring the scale back to $\delta$.
To do this, we note that from \eqref{eq:MatnK}, we have in particular
\[\delta_1^{\frac{1}{2}}\id \in A + B_{\Mat_n(K)}(0,\delta_1).\]
Hence, starting from (\ref{eq:MatnK}), we may multiply both sides by $\delta_1^{\frac{1}{2}}\id$ to obtain
\[ B_V(0,\delta_1)\subset B_V(0,\delta_1^{\eps_0+\frac{1}{2}}) \subset \bracket{A}_2 \cdot v + B_V(0,2\delta_1^{\frac{3}{2}}),\]
and iterating this procedure, we get a sequence of integers $s_2 = 1, s_3 = 2, s_4 , \dotsc $ such that for any $k \geq 2$,
\[B_V(0,s_k\delta_1^{\frac{k}{2}}) \subset \bracket{A}_{s_{k+1}} \cdot v + B_V(0,s_{k+1}\delta_1^{\frac{k+1}{2}}).\]
Choose $k>\frac{2}{c}$ so that $s_k\delta_1^{\frac{k}{2}} \leq \delta$. Combining all these inclusions, we find, for $s=s_2+\dotsb+s_k$,
\[B_V(0,\delta_1^{\eps_0}) \subset \bracket{A}_{s} \cdot v + B_V(0,\delta),\]
which proves the proposition.
\end{proof}

The above proposition readily implies Theorem~\ref{basecase}.

\begin{proof}[Proof of Theorem~\ref{basecase}]
It suffices to apply Proposition~\ref{pr:sumproduct} to the set $\pi_V(A)\subset\End(V)$.
By the assumption on $A$, conditions \ref{it:ar} and \ref{it:vd} of the proposition are satisfied for the set $\pi_V(A)$.
That condition \ref{it:airr} is also satisfied is a consequence of Lemma~\ref{lm:rhoOirred} below.
\end{proof}

\begin{lemma}\label{lm:rhoOirred}
Let $0 < \rho < \frac{1}{2}$ be a parameter. Let $\pi \colon G \to \GL(V)$ be a non-trivial irreducible representation.
There is a neighborhood $U$ of $1$ in $G$ such that if $A \subset U$ is $\rho$-away from identity components of proper stabilizers then $\pi(A)$ acts $\rho^{O_\pi(1)}$-irreducibly on $V$.
\end{lemma}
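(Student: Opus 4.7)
My plan is to deduce the lemma from a \L{}ojasiewicz-type inequality on the compact Grassmannian of $V$. Fix a proper non-trivial subspace $W < V$; by irreducibility of $\pi$, such a $W$ is automatically not a $G$-submodule, so the hypothesis on $A$ supplies some $a \in A$ with $d(a, (\Stab_G W)^\circ) \geq \rho$. It then suffices to show that the off-diagonal block
\[\Psi(W, a) := (\Id - P_W)\,\pi(a)\,P_W,\]
where $P_W$ denotes the orthogonal projection onto $W$, has operator norm at least $\rho^{O_\pi(1)}$: any $v \in B_W(0,1)$ essentially attaining the operator norm of $\Psi(W,a)$ then satisfies $d(\pi(a) v, W) \geq \rho^{O_\pi(1)}$.

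I would view $\Psi$ as a single map $\Grass(V) \times \overline{U} \to \End(V)$ for a suitable small neighborhood $U$ of the identity. It is real-analytic in $g$ and polynomial in $W$ (in Grassmannian coordinates), hence subanalytic on its compact domain, and its zero set is exactly $\{(W,g) : g \in \Stab_G W\}$. After shrinking $U$ uniformly in $W$---which is possible because $\Grass(V)$ is compact and, by semi-algebraic considerations, the non-identity components of $\Stab_G W$ stay a fixed distance away from $1$---one has $\Stab_G W \cap U = (\Stab_G W)^\circ \cap U$ for every proper non-trivial $W$. The subanalytic function $(W, g) \mapsto d(g, \Stab_G W \cap U)$ then vanishes exactly where $\|\Psi(W, g)\|$ vanishes, and majorizes $d(g, (\Stab_G W)^\circ)$ on the whole domain.

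The \L{}ojasiewicz inequality for subanalytic functions on a compact subanalytic set now produces constants $c > 0$ and $N \in \N$, depending only on $\pi$ and $G$, such that
\[\|\Psi(W, g)\| \geq c\,d(g, \Stab_G W \cap U)^N\]
on $\Grass(V) \times \overline{U}$. Evaluating at our $a$ gives $\|\Psi(W, a)\| \geq c\rho^N$, whence the existence of $v \in B_W(0,1)$ with $d(\pi(a) v, W) \geq \tfrac{c}{2}\rho^N \geq \rho^{O_\pi(1)}$ for $\rho$ small, yielding the desired $\rho^{O_\pi(1)}$-irreducibility.

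The main obstacle is the uniformity in $W$ of both the \L{}ojasiewicz constants and the choice of $U$ that collapses the identity-component issue. Without this uniformity the naive linearized approach fails: the smallest nonzero singular value of the tangent map $X \mapsto p_{V/W}\circ d\pi(X) \circ \iota_W$ can degenerate as $W$ approaches a stratum where $\dim \Stab_\g W$ jumps. Working with $\Psi$ directly on the group side and invoking the uniform subanalytic \L{}ojasiewicz inequality over the compact Grassmannian is precisely what bypasses this issue.
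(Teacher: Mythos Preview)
Your plan has a genuine gap at the key step: the inequality
\[\|\Psi(W,g)\| \geq c\,d(g,\Stab_G W \cap U)^N\]
does \emph{not} follow from \L{}ojasiewicz, and is in fact false. The standard \L{}ojasiewicz inequality applied to the real-analytic function $\|\Psi\|^2$ on $\Grass(V)\times\overline U$ yields only $\|\Psi(W,g)\|^2 \geq c\,d\bigl((W,g),Z\bigr)^N$, where the distance is to the full zero set $Z=\{(W',g'):g'\in\Stab_G W'\}$ and $W$ is allowed to move. Your fibrewise distance $h(W,g)=d(g,\Stab_G W\cap U)$ satisfies $h(W,g)\geq d((W,g),Z)$, the wrong inequality. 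The comparison form of \L{}ojasiewicz you invoke would require $h$ to be continuous on the product, but $h$ is only lower semicontinuous: when $\dim\Stab_G W$ jumps up at a special $W$, $h$ drops discontinuously to $0$.

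A concrete counterexample in an irreducible representation: take $G=\SL(2,\R)$ acting by $\Ad$ on $V=\mathfrak{sl}(2,\R)$ with orthonormal basis $H,E,F$. For $W_s=\R(E+sF)$ one has $\Stab_\g W_s=\R(E+sF)$ when $s\neq 0$ but $\Stab_\g W_0=\R H\oplus\R E$. For $g=\exp(tH)$ with $t,s$ small one computes $d(g,(\Stab_G W_s)^\circ)\approx|t|$ while $\|\Psi(W_s,g)\|\approx|s|\,|t|$. Sending $s\to 0$ with $t$ fixed kills any uniform bound $\|\Psi\|\geq c\,h^N$. This is exactly the degeneration you flag in your final paragraph; compactness of the Grassmannian does not cure it, because the comparison function $h$ itself is discontinuous there.

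The paper avoids this by arguing contrapositively and letting $W$ move. It first invokes a separate lemma (proved via the logarithm power series, which supplies the uniform neighborhood $U$ you gesture at with ``semi-algebraic considerations'') to pass from identity components to full stabilizers and to reduce to a finite $A=\{a_1,\dots,a_n\}$ with $n\leq\dim G$. Then it applies the ordinary \L{}ojasiewicz inequality to
\[f(g_1,\dots,g_n;W)=\sum_i\int_{B_W(0,1)}d(g_i w,W)^2\,dw\]
on $\overline U^{\,n}\times\Grass(k,V)$: if $\pi(A)$ fails to act $\rho^C$-irreducibly with witness $W_0$, then $f(\ba,W_0)$ is small, hence $(\ba,W_0)$ is $\rho$-close to $Z$, hence there exist a nearby $W'$ and points $b_i\in\Stab_G W'$ with $d(a_i,b_i)\leq\rho$ for \emph{all} $i$ simultaneously, contradicting that $A$ is $\rho$-away from proper stabilizers. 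The freedom to replace $W_0$ by the nearby $W'$ is precisely what absorbs the dimension jump that breaks your pointwise estimate.
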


The proof of this lemma is an application of \L ojasiewicz's inequality, but first, it is convenient to reduce to the case where $A$ is finite.
This reduction is the subject of the next lemma.
Given a representation $\pi \colon G \to \GL(V)$ of $G$, a subset $A \subset G$ and a parameter $\rho\in(0,\frac{1}{2})$, we say that $A$ is \emph{$\rho$-away from proper stabilizers} if for any linear subspace $W$ of $V$ which is not a $G$-submodule, there exists an element $a$ in $A$ whose distance to the stabilizer $\Stab_GW$ is at least $\rho$.

\begin{lemma}\label{lm:Acanbefinite}
Let $0 < \rho < \frac{1}{2}$ be a parameter. Let $\pi \colon G \to \GL(V)$ be a representation. There is a neighborhood $U$ of $1$ in $G$ such that if $A \subset U$ is $\rho$-away from identity components of proper stabilizers then $A$ is $\rho^{O_\pi(1)}$-away from proper stabilizers.
In fact, $A$ contains a subset of cardinality at most $\dim G$ which is $\rho^{O_\pi(1)}$-away from proper stabilizers.
\end{lemma}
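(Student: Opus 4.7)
The plan is to treat the two assertions in sequence, the second being the substantive one. For the first, I would choose $U$ so that near the identity every proper stabilizer $\Stab_G W$ coincides with its identity component $(\Stab_G W)^\circ$, uniformly in $W$. The key observation is that $\pi(\exp X) = \exp(d\pi(X))$, and that for $\|d\pi(X)\| < \log 2$ the operator $\exp(d\pi(X))$ preserves a subspace $W$ if and only if $d\pi(X)$ does (expand $d\pi(X) = \log(\exp d\pi(X))$ in powers of $\exp(d\pi(X)) - \Id$, each of which leaves $W$ invariant). Since $d\pi$ is bounded, there is a uniform radius $\eta = \eta(\pi) > 0$ such that $\exp X \in \Stab_G W$ iff $X \in \Stab_\g W$ for $\|X\| < \eta$, independently of $W$. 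Setting $U_0 := \exp B_\g(0,\eta)$ gives $U_0 \cap \Stab_G W = U_0 \cap (\Stab_G W)^\circ$, and taking $U$ a sufficiently small neighborhood of $1$ inside $U_0$, any near-minimizer of $d(a,\Stab_G W)$ for $a \in U$ lies in $U_0$, hence in $(\Stab_G W)^\circ$. This yields $d(a,\Stab_G W) = d(a,(\Stab_G W)^\circ)$ and proves the first assertion with no loss in the exponent.

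For the second assertion, I would build $A' \subset A$ greedily. Starting from $A_0 = \emptyset$, at step $j$, if $A_j$ is already $\rho^C$-away from identity components of proper stabilizers (for a suitable exponent $C = C(\pi) = O(\dim G)$ determined below), stop; otherwise pick $W_j$ not a submodule with $A_j \subset ((\Stab_G W_j)^\circ)^{(\rho^C)}$ and use the hypothesis on $A$ to find $a_{j+1} \in A$ with $d(a_{j+1},(\Stab_G W_j)^\circ) \geq \rho$, setting $A_{j+1} = A_j \cup \{a_{j+1}\}$. Passing to the Lie algebra via $X_i := \log a_i$ and $T_j := \Stab_\g W_j$, bi-Lipschitzness of $\exp$ near $0$ turns these conditions into $d(X_i,T_j) \lesssim \rho^C$ for $i \leq j$ and $d(X_{j+1},T_j) \gtrsim \rho$. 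The central inductive invariant is $\|X_1 \wedge \dotsb \wedge X_j\| \geq \rho^{e_j}$ with $e_j = O(j)$: this quantitative linear independence controls the condition number of $(X_1,\dotsc,X_j)$ and upgrades the pointwise estimates to a subspace gap $\delta(L_j, T_j) \lesssim \rho^{C - O(e_j)}$, where $L_j := \Span(X_1,\dotsc,X_j)$. Choosing $C$ large enough in terms of $\dim G$ so that this gap is below $\rho/2$, the vector $X_{j+1}$, at distance $\gtrsim \rho$ from $T_j$, is also at distance $\gtrsim \rho$ from $L_j$; hence $L_{j+1}$ has dimension $j+1$ and $\|X_1 \wedge \dotsb \wedge X_{j+1}\| \geq \rho^{e_j + O(1)}$, which propagates the invariant. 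Since $\dim L_j \leq \dim G$, the loop halts in at most $\dim G$ steps, and the first assertion then upgrades separation from identity components into separation from full proper stabilizers.

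The main difficulty is precisely this upgrade from pointwise closeness of the individual $X_i$'s to $T_j$ to subspace closeness of their span $L_j$ to $T_j$, since arbitrary linear combinations of vectors close to $T_j$ can escape any fixed tube as soon as their coefficients blow up. The inductive Gram-determinant bound is exactly what rules this coefficient blow-up out, at the cost of a polynomial-in-$\rho$ loss at each of the $\dim G$ iterations; the total loss is absorbed into the $O_\pi(1)$ exponent.
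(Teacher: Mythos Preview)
Your proof is correct and follows essentially the same approach as the paper: the first assertion uses the same key observation that near the identity $e^x \in \Stab_G W$ iff $x \in \Stab_\g W$ (via the power series of $\log$), and the second assertion is precisely the greedy/Gram-determinant argument of \cite[Lemma~2.5]{saxce_producttheorem}, which the paper simply cites rather than writing out. Your presentation of the first part is in fact slightly cleaner, yielding the equality $d(a,\Stab_G W)=d(a,(\Stab_G W)^\circ)$ on $U$ without the bi-Lipschitz constant.
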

\begin{proof}
The representation $\pi$ differentiates to a representation of the Lie algebra $\g$ of $G$, which we denote by $T_{\!1}\pi \colon \g \to \End(V)$.
The stabilizer of $W$ in $\g$
\[\Stab_\g W = \{x \in \g \mid T_{\!1}\pi(x)W \subset W\}\]
is the Lie algebra of $\Stab_GW$.
In particular, its image under the exponential map is contained in $(\Stab_GW)^\circ$, the identity component of $\Stab_GW$.
We may assume that $\exp$ induces a diffeomorphism from $U$ to its image, and denote the inverse map by $\log$.
Say that $\log A$ is \emph{$\rho$-away from proper stabilizers} in $\g$ if for any linear subspace $W < V$ which is not a $G$-submodule, there exists $a \in A$ such that $d(\log a,\Stab_\g W) \geq \rho$.

We claim that there is a neighborhood $U$ of $1$ in $G$ such that if $A \subset U$ is $\rho$-away from identity components of proper stabilizers then $\log A$ is $\frac{\rho}{C}$-away from proper stabilizers in $\g$ and conversely if $\log A$ is $\rho$-away from proper stabilizers then $A$ is $\frac{\rho}{C}$-away from proper stabilizers.

Let us prove this claim.
Let $x \in \g$. From the identity $\pi(e^x) = e^{T_{\!1}\pi(x)}$, we can express $T_{\!1}\pi(x)$ as an absolutely convergent series
\[T_{\!1}\pi(x) = - \sum_{n\geq 1} \frac{1}{n}\bigl(\id_V - \pi(e^x)\bigr)^n\]
whenever $\norm{\pi(e^x) - \id_V} < 1$.
Therefore, if $e^x\in\Stab_G W$ is such that the above series converges, then $x\in\Stab_\g W$.
It follows that there is $r > 0$ depending only on $\pi$ such that
\[ B_G(1,r)\cap \Stab_G W \subset \exp(\Stab_\g W).\]
Let $U = B_G(1,\frac{r}{2})$. Then for any $g \in U$ and any proper linear subspace $W$,
\[\frac{1}{C}d(g,(\Stab_G W)^\circ) \leq d(\log g,\Stab_\g W) \leq Cd(g,\Stab_G W)\]
where $C> 0$ is some constant depending only on the the representation. This proves our claim, and the first part of the lemma.

For the second part, one can reproduce the argument in~\cite[Lemma~2.5]{saxce_producttheorem} to show that if $\log A$ is $\rho$-away from proper stabilizers then $\log A$ contains a subset of cardinality at most $\dim \g$ which is $\rho^{O_{\dim(\g)}(1)}$-away from stabilizers.
\end{proof}

\begin{remark}
Note that in the above lemma, the neighborhood $U$ depends on the representation $\pi$, and not only on $G$.
This is readily seen by considering $G=\R$, $V=\C\simeq\R^2$, and $\pi(x)v=e^{inx}v$, $n \in \N$.
\end{remark}

\begin{proof}[Proof of Lemma~\ref{lm:rhoOirred}]
Let $U$ be the neighborhood given by Lemma~\ref{lm:Acanbefinite}. On account of this lemma we may assume that $A$ is finite of cardinality $n \leq \dim G$ and $\rho$-away from proper stabilizers. Let $0 < k < \dim(V)$ and consider the analytic map $f \colon G^n \times \Grass(k,V) \to \R$ defined by
\[f(g_1,\dotsc,g_n;W) = \sum_{i=1}^n\int_{\raisebox{-0.3ex}{$\scriptstyle B_W(0,1)$}} \hspace{-1em} d(g_i \cdot w,W)^2 \dd w.\]
The zero set of $f$ is exactly
\[Z = \{ (\mathbf{g},W) \in G^n \times \Grass(k,V) \ |\  \forall i,\, g_i \in \Stab_G W\}.\]
By \L{}ojasiewicz's inequality \cite[Théorème 2, page 62]{lojasiewicz} applied on $\bar{U}^n \times \Grass(k,V)$, there is a constant $C>0$ such that for any $(\mathbf{g},W) \in U^n \times \Grass(k,V)$,
\[f(\mathbf{g},W) \geq \frac{1}{C}d\bigl( (\mathbf{g},W),Z\bigr)^C.\]
Assuming that $A$ does not act $\frac{1}{C}\rho^C$-irreducibly on $V$, we can find $W \in \Grass(k,V)$ such that for all $a \in A$ and all $w \in B_W(0,1)$, $\pi(a)w \in W + B_V(0,\frac{1}{C}\rho^{C})$.
So $f(a_1,\dots,a_n,W)\leq\frac{1}{C}\rho^C$, and by the inequality above there exists $W' \in \Grass(k,V)$ such that for all $a \in A$, $d(a, \Stab_G W') \leq \rho$, so that $A$ is not $\rho$-away from proper stabilizers.
\end{proof}

\subsection{Induction step}\label{ss:inductionstep}

The core of the induction step in the proof of Theorem~\ref{spclassp} is the following lemma.
It is a quantitative discretized version of the following elementary fact: let $V$ be a $G$-module, and $V_1,X$ two submodules of $V$; if $\pi:V\to V/V_1$ maps $X$ onto $V/V_1$ and if $X\cap V_1=\{0\}$, then $V=X\oplus V_1$.
Once more, the proof relies on \L ojasiewicz's inequality.

\begin{lemma}\label{inductionstep}
Let $G$ be a connected Lie group and $V$ a normed $G$-module.
There exist a neighborhood $U$ of the identity in $G$ and a constant $C\geq 1$ such that for any parameters $0 < \eta < \tau < 1$, the following holds when $\delta$ is sufficiently small.
Let $V_1$ be a proper submodule of $V$ and $\pi \colon V\to V/V_1$ the canonical projection.
Let $A\subset U$ and $X\subset B_V(0,1)$ and assume that
\begin{enumerate}
\item \label{it:AXWinB} $\bracket{A,X}_3 \cap V_1^{(\delta)} \subset B_V(0,\delta^{C\tau})$,
\item \label{it:piXball} $\pi(X) = B_{V/V_1}(0,\delta^\eta)$,
\item $A$ is $\delta^\tau$-away from identity components of proper stabilizers.
\end{enumerate}
Then there exists a submodule $W < V$ such that:
\begin{enumerate}[label=(\alph*)]
\item \label{eq:dangV1W} The restriction $\pi_{\mid W} \colon W \to V/V_1$ is $3\delta^{-\eta}$-bi-Lipschitz;
\item \label{eq:XsubmoduleV1} $B_{W}(0,\delta^\eta) \subset X^{(\delta^\tau)}$  and $X \subset W^{(\delta^\tau)}$.
\end{enumerate}
\end{lemma}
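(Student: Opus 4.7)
The strategy is to produce the submodule $W$ in three stages: first construct a candidate $d$-dimensional subspace $W_0 \subset V$ from a lifted basis of $V/V_1$; second verify that $W_0$ is close to $X$ and approximately $A$-invariant; and third apply \L{}ojasiewicz's inequality (as in the proof of Lemma~\ref{lm:rhoOirred}) to replace $W_0$ by a true $G$-submodule $W$ nearby, using the away-from-stabilizers hypothesis~(iii) to rule out non-submodule approximations.

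Set $d = \dim(V/V_1)$. By hypothesis~\ref{it:piXball}, $\pi(X) = B_{V/V_1}(0,\delta^\eta)$ has full dimension, so we can choose $x_1,\dots,x_d \in X$ whose projections $e_j := \pi(x_j)$ form a basis of $V/V_1$ with $\|e_j\| \asymp \delta^\eta$ and controlled directional separation. Let $W_0 = \Span(x_1,\dots,x_d)$; since the $e_j$ are well-conditioned, $\pi|_{W_0}$ is a linear bijection onto $V/V_1$ with bi-Lipschitz constant close to $\delta^{-\eta}$. The rigidity of $X$ comes from hypothesis~\ref{it:AXWinB}: any element of $\bracket{A,X}_3$ within $\delta$ of $V_1$ has norm at most $\delta^{C\tau}$. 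Choosing any lift $\sigma \colon B_{V/V_1}(0,\delta^\eta) \to X$ of $\pi$, the vector $\sigma(y_1) + \sigma(y_2) - \sigma(y_1 + y_2) \in V_1 \cap \bracket{A,X}_3$ therefore has norm $\leq \delta^{C\tau}$, giving approximate additivity of $\sigma$. Iterating additivity along integer combinations of the $e_j$'s, and interpolating to the continuous ball using the fact that two lifts of $\delta$-close points of $V/V_1$ differ by a vector in $V_1 \cap \bracket{A,X}_2$ of norm $\leq \delta^{C\tau}$, yields $X \subset W_0^{(\delta^\beta)}$ and $B_{W_0}(0,\delta^\eta) \subset X^{(\delta^\beta)}$ for some exponent $\beta$ polynomial in $C\tau$, provided $C$ is chosen large in terms of $\eta, \tau, d$.

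Approximate $A$-invariance follows from the same input: for $a \in A$ and a basis element $x_j$, pick $x' \in X$ with $\pi(x') = \pi(a \cdot x_j)$ (shrinking $U$ so that $\pi(a \cdot x_j)$ lies in $B_{V/V_1}(0,\delta^\eta)$); then $a \cdot x_j - x' \in V_1 \cap \bracket{A,X}_3$ has norm $\leq \delta^{C\tau}$, and since $x' \in X \subset W_0^{(\delta^\beta)}$, the $V_1$-component of $a \cdot x_j$ in the direct sum decomposition $V = W_0 \oplus V_1$ has norm $\leq 2\delta^\beta$. Expanding any $w \in B_{W_0}(0,1)$ in the basis (with coefficients bounded by $O(\delta^{-\eta})$) gives $d(a \cdot w, W_0) \leq O(\delta^{\beta - \eta})$. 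By Lemma~\ref{lm:Acanbefinite}, reduce to $A$ finite of cardinality $\leq \dim G$, still $\delta^{O(\tau)}$-away from proper stabilizers. Applying \L{}ojasiewicz's inequality to the analytic function $f(\mathbf{a},W) = \sum_i \int_{B_W(0,1)} d(a_i \cdot w, W)^2 \dd w$ on $\overline{U}^n \times \Grass(d,V)$, whose zero set is $\{(\mathbf{a},W) : a_i \in \Stab_G W\ \forall i\}$, the bound $f(A,W_0) \leq \delta^{2(\beta - \eta)}$ produces $(\mathbf{a}',W')$ in the zero set within $\delta^{(\beta - \eta)/C_L}$ of $(A,W_0)$, where $C_L$ is the \L{}ojasiewicz exponent. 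If $W'$ were not a $G$-submodule, $\Stab_G W'$ would be a proper subgroup and $A$ would lie close to its identity component, contradicting hypothesis~(iii) provided $(\beta - \eta)/C_L > \tau$; arranging this fixes the constant $C$ in terms of $\eta, \tau, d, C_L$. Hence $W := W'$ is a $G$-submodule, close to $W_0$, and properties (a), (b) of the lemma transfer from the corresponding statements for $W_0$.

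The main obstacle is stage two: propagating the one-step additivity bound (three elements in $\bracket{A,X}_3$) into full approximate linearity of $\sigma$ on $B_{V/V_1}(0,\delta^\eta)$. This requires choosing the basis scale carefully so that iterated integer combinations stay inside the domain of $\sigma$, working on a $\delta$-fine grid of integer combinations, and interpolating to arbitrary points in $B_{V/V_1}(0,\delta^\eta)$ via the estimate on lifts of nearby points. All these quantities must be balanced so that the resulting $\beta$ satisfies $\beta - \eta > C_L \tau$, closing the chain of estimates and determining the required constant $C$.
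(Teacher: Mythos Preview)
Your overall architecture---build an approximately $A$-invariant subspace $W_0$ from a section of $\pi$, then use \L{}ojasiewicz's inequality together with Lemma~\ref{lm:Acanbefinite} to snap $W_0$ onto a genuine $G$-submodule---is exactly the paper's approach. The gap is in stage two, and it is a real one.

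The lemma fixes the constant $C$ \emph{before} the parameters $\eta,\tau$: the statement reads ``there exist $U$ and $C$ such that for any $0<\eta<\tau<1$\dots''. You explicitly choose $C$ ``in terms of $\eta,\tau,d,C_L$'', which reverses the quantifiers. This is not a slip of phrasing; your proposed mechanism forces it. Iterating the one-step additivity bound along integer combinations on a $\delta$-fine grid requires on the order of $\delta^{\eta-1}$ applications of additivity to reach a generic point of $B_{V/V_1}(0,\delta^\eta)$, so the accumulated error is $\asymp \delta^{C\tau+\eta-1}$. For this to be at most $\delta^{\tau}$ you need $C \geq 1 + (1-\eta)/\tau$, which blows up as $\tau\to 0$. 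No choice of basis scale repairs this: a finer basis multiplies the number of iterations, while a coarser one leaves gaps that the ``lifts of $\delta$-close points'' estimate cannot fill.

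The paper isolates precisely this difficulty as a separate lemma (Lemma~\ref{almostadditive}): an almost-additive map $\sigma$ with per-step error $\rho_1$ is within $O(\lvert\log\delta\rvert)\,\rho_1$ of a genuine linear map $\phi$. The point is a \emph{dyadic} argument---control $\sigma(2^{-n})$ inductively and then read off $\sigma(y)$ from the binary expansion of $y$---so that only $O(\lvert\log\delta\rvert)$ uses of additivity are needed, not $\delta^{-c}$. This yields an error $\leq \delta^{(C-1)\tau}$ between $\sigma$ and $\phi$, hence $\beta$ of the form $(C-k)\tau$ with $k$ a fixed constant; since $\eta<\tau$, the requirement $\beta-\eta> C_L\tau$ then reduces to $C$ larger than a constant depending only on $V$. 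With $W_0$ taken to be the image of $\phi$, the rest of your argument goes through verbatim. Inserting this dyadic step (or simply invoking Lemma~\ref{almostadditive}) closes the gap; without it, the chain of inequalities cannot be closed with $C$ independent of $\tau$.
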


\begin{proof}
For convenience, we write $V'=V/V_1$. On account of Lemma~\ref{lm:Acanbefinite}, which gives us the neighborhood $U$, we may assume that $A$ is finite of cardinality $n \leq \dim(G)$ and is $\delta^{C_1\tau}$-away from proper stabilizers, where $C_1 \geq 2$ is a constant depending only on $V$. Shrinking again the neighborhood $U$ if necessary, we can ensure that the action on $V$ of any element in $A$ is $2$-bi-Lipschitz.

Assumption~\ref{it:piXball} allows us to pick a section $\sigma : B_{V'}(0,\delta^\eta) \to X$ of the projection $\pi$, i.e. for any $y\in B_{V'}(0,\delta^\eta)$,
\[\pi\circ\sigma(y)=y.\]
The choice of such $\sigma$ is arbitrary. In fact, thanks to assumption~\ref{it:AXWinB}, different choices only differ by at most $\delta^{C\tau}$. Indeed, for any $x \in X$, we have $x - \sigma(\pi(x)) \in (X - X) \cap V_1$ and therefore, by assumption~\ref{it:AXWinB},
\begin{equation}\label{eq:sigmaUni}
\norm{x - \sigma(\pi(x))} \leq \delta^{C\tau}.
\end{equation}
Again from assumption~\ref{it:AXWinB}, it follows that $\sigma$ is almost a morphism of $G$-modules, in the sense that for all $y,z \in B_{V'}(0,\delta^\eta)$ and all $a \in A$,
\begin{align}
\label{eq:sigma00} \norm{\sigma(y)} &\leq \delta^{C\tau} \quad\text{ if } y \in B_{V'}(0,\delta); \\
\label{eq:sigmadd} \norm{\sigma(y) + \sigma(z) - \sigma(y + z)} &\leq \delta^{C\tau} \quad\text{ if } y+z \in B_{V'}(0,\delta^\eta);\\
\label{eq:sigmaeq} \norm{a \cdot \sigma(y) - \sigma(a \cdot y)}  &\leq \delta^{C\tau} \quad\text{ if } a \cdot y \in B_{V'}(0,\delta^\eta).
\end{align}
Indeed, we have, respectively, $\sigma(y) \in X \cap V_1^{(\delta)}$, $\sigma(y) + \sigma(z) - \sigma(y + z) \in 3X \cap V_1$ and $a \cdot \sigma(y) - \sigma(a \cdot y) \in (A \cdot X - X)\cap V_1$.

In particular, \eqref{eq:sigma00} and \eqref{eq:sigmadd} says that $\sigma$ is almost additive; by Lemma~\ref{almostadditive} below, $\sigma$ is close to a genuine linear map.
More precisely, there exists a linear section $\phi : V' \to V$ of $\pi$ (i.e. $\pi \circ \phi = \Id_{V'}$) such that for all $y\in B_{V'}(0,\delta^\eta)$,
\begin{equation}\label{eq:phisigma}
 \|\phi(y)-\sigma(y)\| \leq\delta^{(C-1)\tau},
\end{equation}
provided $\delta$ is small enough.
From the linearity of $\phi$, the fact that $X \subset B_V(0,1)$, and \eqref{eq:sigmaUni}, \eqref{eq:sigmaeq} and \eqref{eq:phisigma}, we obtain that for all $y \in V'$, all $a \in A$ and all $x \in X$,
\begin{equation*}
\norm{\phi(y)} \leq  2\delta^{-\eta} \norm{y};
\end{equation*}
\begin{equation*}
\norm{a \cdot \phi(y) - \phi(a \cdot y)} \leq \delta^{(C - 3)\tau}\norm{y};
\end{equation*}
\begin{equation*}
\norm{x - \phi(\pi(x))} \leq \delta^{(C-2)\tau}.
\end{equation*}

Let $W_0$ be the image subspace of $\phi$. From the above, it follows that:
\begin{equation}\label{eq:dangV1W0}
\text{the restriction } \pi_{\mid W_0} \colon W_0 \to V' \text{ is $2\delta^{-\eta}$-bi-Lipschitz};
\end{equation}
\begin{equation}\label{xinvo}
X \subset W_0 + B_V(0,\delta^{(C-2)\tau});
\end{equation}
\begin{equation}\label{voinx}
B_{W_0}(0,\delta^\eta/2)
\subset \varphi(B_{V'}(0,\delta^\eta))
\subset X + B_V(0,\delta^{(C-1)\tau});
\end{equation}
\begin{equation}\label{eq:avV0close}
\forall a\in A,\,\forall w \in B_{W_0}(0,1),\quad d(a\cdot w,W_0) \leq \delta^{(C - 3)\tau}.
\end{equation}

The inequality~\eqref{eq:avV0close} says that $W_0$ is almost invariant under the action of $A$. We now use \L{}ojasiewicz's inequality to show that it is close to a $G$-submodule. Let $a_1,\dotsc,a_n$ be the elements of $A$ and write $\ba = (a_1,\dotsc,a_n)$. Consider the real-analytic function on $G^{\times n}\times\Grass(\dim(V'),V)$ defined by
\[ f(g_1,\dotsc,g_n;W) = \sum_{i=1}^n\int_{\raisebox{-0.3ex}{$\scriptstyle B_{W}(0,1)$}} \hspace{-1em} d(g_i \cdot w,W)^2 \dd w.\]
From~\eqref{eq:avV0close} follows $f(\ba,W_0) \leq \delta^{(2C-7)\tau}$, provided $\delta$ is small enough.
By \L{}ojasiewicz's inequality \cite[Théorème 2, page 62]{lojasiewicz} applied to the compact set $\bar{U}^{\times d}\times\Grass(\dim(V'),V)$, there exists a constant $C_2$
depending only on the representation $V$
such that for all $\mathbf{g}=(g_1,\dotsc,g_n) \in U^{\times n}$ and $W \in \Grass(\dim(V'),V)$,
\begin{equation*}
 f(\mathbf{g},W) \geq \frac{1}{C_2}d((\mathbf{g},W),Z)^{C_2},
\end{equation*}
where $Z$ is the zero set of $f$.
Therefore, there exists $\bb=(b_1,\dotsc,b_n) \in G^{\times n}$ and $W \in \Grass(\dim(V'),V)$ such that $f(\bb,W)=0$ and
\[d((\ba,W_0),(\bb,W))\leq \delta^{C_1\tau},\] provided $2C-7 \geq (C_1 +1)C_2$.
The equality $f(\bb,W)=0$ exactly means that each $b_i$ belongs to the stabilizer $\Stab_GW$, and hence
\[A \subset (\Stab_GW)^{(\delta^{C_1\tau})}\]
But $A$ is $\delta^{C_1\tau}$-away from proper stabilizers, hence $W$ must be a $G$-submodule.
Finally, conclusions \ref{eq:dangV1W} and \ref{eq:XsubmoduleV1} follow from \eqref{eq:dangV1W0}, \eqref{xinvo}, \eqref{voinx} and the fact that $W$ is $\delta^{C_1\tau}$-close to $W_0$.
\end{proof}

In the above proof, we made use of the following elementary lemma, a discretized version of the fact that any continuous additive map between two vector spaces is automatically linear.

\begin{lemma}[Almost additive maps]
\label{almostadditive}
Let $0 < \delta<\rho_1<\rho_2 \leq 1$ be parameters.
Let $V$ and $V'$ be finite-dimensional normed vector spaces.
If $\sigma \colon B_{V'}(0,\rho_2) \to V$ satisfies
\begin{enumerate}
\item \label{it:almostCont} $\sigma(B_{V'}(0,\delta)) \subset B_V(0,\rho_1)$ and
\item \label{it:almostAdd} for all $x, y\in B_{V'}(0,\rho_2)$, if $x + y \in B_{V'}(0,\rho_2)$ then
\[\sigma(x) + \sigma(y) - \sigma(x+y) \in B_V(0,\rho_1).\]
\end{enumerate}
Then there is a linear map $\phi \colon V' \to V$ such that for all $x \in B_{V'}(0,\rho_2)$,
\[\|\sigma(x) - \phi(x)\| \ll_{V'} (-\log \delta + 1) \rho_1.\]
Moreover, if there are linear maps $\pi \colon V \to V''$ and $\psi \colon V' \to V''$ such that $\pi \circ \sigma = \psi$ on $B_{V'}(0,\rho_2)$, then we may also ensure that $\pi \circ \phi = \psi$ on $V'$.
\end{lemma}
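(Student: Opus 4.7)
The plan is to build $\phi$ as the linear extension of $\sigma$ from a well-chosen basis of $V'$, and to control the discrepancy by dyadic expansion of coordinates. I would first deduce two elementary consequences of the almost-additivity hypothesis~\ref{it:almostAdd}: by induction on $m$, for any $x$ with $x, 2x, \dotsc, mx \in B_{V'}(0,\rho_2)$,
\[ \norm{\sigma(mx) - m\sigma(x)} \leq (m-1)\rho_1,\]
hence, applied to $x = y/m$, $\norm{\sigma(y/m) - \sigma(y)/m} \leq \rho_1$ whenever $y \in B_{V'}(0,\rho_2)$. Combined with $\norm{\sigma(0)} \leq \rho_1$ and the relation $\sigma(-y) = -\sigma(y) + O(\rho_1)$ (both immediate from \ref{it:almostAdd} applied at $y+(-y)=0$), this yields the scaling estimate
\[ \sigma(b \cdot 2^{-k} y) = b \cdot 2^{-k}\sigma(y) + O(\rho_1)\]
for any $b \in \{-1,0,1\}$, any $k \geq 0$, and any $y \in B_{V'}(0,\rho_2)$.

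Next I would fix a basis $e_1,\dotsc,e_n$ of $V'$ with $\norm{e_i}$ a sufficiently small fraction of $\rho_2$ (depending only on $V'$), so that every $x$ in a slightly smaller ball $B_{V'}(0,\rho_2/C)$ has coordinates $\lambda_i$ in $x = \sum \lambda_i e_i$ bounded by an absolute constant. Define $\phi \colon V' \to V$ to be the linear map sending $e_i \mapsto \sigma(e_i)$; applying the scaling estimate to $e_i$ and $e_i/2^k$ with $2^k \asymp \norm{e_i}/\delta$ yields $\norm{\sigma(e_i)} = O(\rho_1/\delta)$, so $\norm{\phi}_{\mathrm{op}} = O_{V'}(\rho_1/\delta)$. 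For $x$ in the small ball, I would approximate each coordinate by a signed dyadic $\tilde\lambda_i = \sum_{k=0}^{K} b_{i,k} 2^{-k}$ with $b_{i,k} \in \{-1,0,1\}$, choosing $K \asymp -\log\delta$ so that $\tilde x := \sum_i \tilde\lambda_i e_i$ satisfies $\norm{x - \tilde x} \leq \delta$. Writing $\tilde x = \sum_{i,k} b_{i,k} 2^{-k} e_i$ as a sum of $N = n(K+1)$ terms and iterating \ref{it:almostAdd} along an ordering whose partial sums all remain in $B_{V'}(0,\rho_2)$ gives
\[ \sigma(\tilde x) = \sum_{i,k} \sigma(b_{i,k} 2^{-k} e_i) + O(N\rho_1) = \phi(\tilde x) + O(N\rho_1).\]

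To finish, I would compare $\sigma(x)$ with $\sigma(\tilde x)$ and $\phi(x)$ with $\phi(\tilde x)$. Hypothesis \ref{it:almostAdd} together with \ref{it:almostCont} gives $\sigma(x) - \sigma(\tilde x) = \sigma(x - \tilde x) + O(\rho_1) = O(\rho_1)$ since $\norm{x-\tilde x} \leq \delta$, while $\norm{\phi(\tilde x - x)} \leq \norm{\phi}_{\mathrm{op}} \cdot \delta = O(\rho_1)$. Combining yields $\norm{\sigma(x) - \phi(x)} \ll_{V'} (-\log\delta + 1)\rho_1$ on $B_{V'}(0,\rho_2/C)$, and the estimate extends to the full ball $B_{V'}(0,\rho_2)$ at the cost of an absolute constant via one application of the integer-scaling identity $\sigma(x) = C\sigma(x/C) + O(\rho_1)$ together with the linearity of $\phi$. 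The moreover clause is immediate from the construction: $\pi(\phi(e_i)) = \pi(\sigma(e_i)) = \psi(e_i)$ for every $i$, so $\pi\circ\phi = \psi$ by linearity. The main technical nuisance is arranging the $n(K+1)$ dyadic summands so that every partial sum stays inside $B_{V'}(0,\rho_2)$, since \ref{it:almostAdd} is only available there; this is what forces us to work initially in the sub-ball $B_{V'}(0,\rho_2/C)$ and recover the full range by one final scaling.
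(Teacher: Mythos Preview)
Your proposal is correct and follows essentially the same route as the paper: define $\phi$ by setting $\phi(e_i)=\sigma(e_i)$ on a basis, control $\sigma$ on dyadic multiples of the $e_i$ via the halving/doubling consequence of~\ref{it:almostAdd}, then approximate an arbitrary $x$ by a truncated binary expansion of depth $\asymp -\log\delta$ and sum the errors. The only organizational difference is that the paper first treats the case $V'=\R$, $\rho_2=1$ and then passes to general $V'$ and general $\rho_2$ by picking a unit basis and rescaling, whereas you handle everything at once by scaling the basis to size $\asymp\rho_2$ and working in a sub-ball before one final integer-scaling step; the content is the same.
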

\begin{proof}
We first consider the special case where $\rho_2=1$ and $V' = \R$. In this case define $\phi \colon \R \to V$ to be the unique linear map such that $\phi(1) = \sigma(1)$. From assumption~\ref{it:almostAdd}, it follows that
\[\forall x \in {[0,\frac{1}{2}]},\quad \norm{2\sigma(x) - \sigma(2x)} \leq \rho_1.\]
Using this and a simple induction, we prove that
\begin{equation}\label{eq:dyadicS-Ph}
\forall n \in \N,\quad \norm{\sigma(2^{-n}) - \phi(2^{-n})} \leq\rho_1.
\end{equation}
Let $N$ be the integer such that $2^{-N} \leq \delta < 2^{-N + 1}$. It follows from \eqref{eq:dyadicS-Ph} and assumption \ref{it:almostCont} that
\begin{equation}\label{eq:normPh}
\norm{\phi(2^{-N})} \leq 2\rho_1
\end{equation}
For any $x \in {[0,1]}$, let $(x_1,\dotsc,x_N) \in \{0,1\}^N$ be the $N$ first digits in its binary expansion, i.e. for some $r \in {[0,\delta]}$,
$x = \sum_{n= 1}^N x_n 2^{-n} + r$.
Then by assumption~\ref{it:almostAdd}, \eqref{eq:dyadicS-Ph} and \eqref{eq:normPh},
\begin{align*}
\norm{\sigma(x) - \phi(x)} &\leq \sum_{n=1}^N x_n\norm{\sigma(2^{-n}) - \phi(2^{-n})} + \norm{\sigma(r)} +  2^N r\norm{\phi(2^{-N})} + N\rho_1\\
&\leq (2N + 5)\rho_1.
\end{align*}
Consequently,
\begin{align*}
\norm{\sigma(-x) - \phi(-x)} &\leq \norm{\phi(x) -\sigma(x)} + \norm{\sigma(-x) + \sigma(x) - \sigma(0)} + \norm{\sigma(0)}\\
&\leq (2N + 7)\rho_1.
\end{align*}
This proves the lemma in the case $V'=\R$ and $\rho_2=1$.
For general normed vector space $V'$, in the case $\rho_2=1$, pick a basis $(u_1,\dotsc,u_d)$ consisting of vectors of unit length then apply the special case to each partial function $\sigma_i:t \mapsto \sigma(tu_i)$, $i = 1,\dotsc,d$.
This yields linear maps $\phi_1,\dots,\phi_d:\R\to V$, and we define $\phi:V'\to V$ by $\phi(t_1u_1+\dots+t_du_d)=\phi_1(t_1)+\dots+\phi_d(t_d)$.
Then by \ref{it:almostAdd}, we have the desired inequality for any vector in $B_{V'}(0,1)\cap({[-1,1]}u_1 + \dotsb + {[-1,1]}u_d)$. This domain contains a ball $B_{V'}(0,\frac{1}{k})$ where $k \in \N$ depends only on $V'$ and the choice of the basis. We conclude by using $k$ times the almost additivity \ref{it:almostAdd}.

The general case $\rho_2\leq 1$ follows from the case $\rho_2=1$, by considering the map $\sigma':V'\to V$ defined by $\sigma'(x)=\sigma(\rho_2x)$.

The moreover part is clear from the proof.
\end{proof}

%

We are now ready to prove Theorem~\ref{spclassp}.
The main idea is to use induction on the length of the module.
Note that among the assumptions of Theorem~\ref{spclassp}, \ref{it:Aaway} is preserved by passing to any submodule or any quotient of $V$ and \ref{it:Xaway} is preserved by passing to any quotient. Finally, \ref{it:piArich} passes to the quotient $V/V_1$ of $V$ by the first submodule $V_1$ in the Jordan-Hölder decomposition.
Thus by the induction hypothesis, it is easy to produce a large ball in $V/V_1$.
Then it can be proved (this is done in the third step of the proof below) that we can produce a large vector in $V_1$ and hence a large ball in $V_1$ by the base case. Then a technical difficulty arises : a large ball in $V/V_1$ and a large ball in $V_1$ does not add up to a ball in $V$. To deal with this difficulty we need to produce the large ball in $V/V_1$ using only vectors of controlled length (this is done in the first step in the proof below). Another technical difficulty is in the third step where we want to produce vector in $V_1$ of length $\geq \delta^{\eps_2}$ for any given $\eps_2 > 0$.
The idea is that, otherwise we could apply Lemma~\ref{inductionstep} to conclude that $X$ is trapped in a submodule, which would contradict assumption (\ref{it:Xaway}).

\begin{proof}[Proof of Theorem~\ref{spclassp}]
The proof goes by induction on the length $\ell$ of the module $V$.
The base case $\ell=1$, where $V$ is a non-trivial irreducible representation, corresponds to Theorem~\ref{basecase}, and is proved above.
Assume that the result holds for all representations of length less than $\ell\geq 2$, let $V\in\cP(G)$ be a representation of length $\ell$, and suppose $A\subset G$ and $X\subset V$ satisfy conditions \ref{it:piArich}-\ref{it:Aaway}-\ref{it:Xaway} of the theorem, for some small $\eps>0$.
Let $0 = V_0 < \dotso < V_\ell = V$ be the Jordan-Hölder sequence given by assumption~\ref{it:piArich}. Write $V'= V/V_1$ and denote by $\pi_{V'} \colon V \to V'$ the projection. Then the module $V'$ has length $\ell -1$ and as noted above, the conditions in Theorem~\ref{spclassp} are satisfied for $A$ acting on $\pi_{V'}(X) \subset V'$.


\noindent\underline{First step:} We first prove that there exists $\eps_1 > 0$ and $s_1 \geq 1$ depending on $V$, $\eps_0$ and $\kappa$ such that
\[B_{V'}(0,\delta^{\eps_0}) \subset \pi_{V'}(\bracket{A,X}_{s_1}\cap B(0,\delta^{\eps_1}))+B_{V'}(0,\delta).\]

Let $\eps_1\in(0,\eps_0)$ be a small parameter, whose precise value will be specified at the end of this step.
By applying the induction hypothesis to $V'$, whose length is at most $\ell-1$, and replacing $X$ by $\bracket{A,X}_s$, we may assume that $B_{V'}(0,\delta^{\eps_1}) \subset \pi_{V'}(X)^{(\delta)}$.
Cover $X$ with $\delta^{-O(\eps_1)}$ balls of radius $\delta^{2\eps_1}$, pick a ball $B$ such that $\ND{\pi_{V'}(B \cap X)}$ is maximal, and translate it back to the origin to get
\begin{equation*}
 \ND{\pi_{V'}(X')} \geq \delta^{-\dim(V') + O(\eps_1)},
\end{equation*}
with $X' = (X - X)\cap B_V(0,\delta^{2\eps_1})$.
This lower bound ensures that $\pi_{V'}(X')$ is $\delta^{O(\eps_1)}$-away from proper linear subspaces in $V'$.
The induction hypothesis, applied to the subset $\pi_{V'}(X') \subset V'$, with acting set $A$, yields the desired inclusion provided that $\eps_1$ is small enough.

\noindent\underline{Second step:} Assuming $X^{(\delta)} \cap V_1$ contains a large vector.

Let $s_2, \eps_2>0$ be the quantities given by Theorem~\ref{basecase} applied to the representation $V_1$, with constants $\kappa,\eps_1$.
We may choose $s_2$ and $\eps_2$ uniformly over all choices for $V_1$; indeed, up to a $(\dim V)$-bi-Lipschitz isomorphism of $G$-modules, there are only finitely many choices for $V_1$.
And assume that there exists $v\in X^{(\delta)} \cap V_1$ with
$\|v\|\geq\delta^{\eps_2}$.
Then, using the base case for the action of $G$ on the irreducible module $V_1$, we find that
\begin{equation}\label{inw}
B_{V_1}(0,\delta^{\eps_1}) \subset \bracket{A,X^{(\delta)}}_{s_2} + B_V(0,\delta).
\end{equation}
Now let $z\in B_V(0,\delta^{\eps_0})$.
By the first step, we may find $y\in\bracket{A,X}_{s_1}\cap B_V(0,\delta^{\eps_1})$ and $t\in V_1$ such that $z=y+t+O(\delta)$.
Necessarily, $\|t\|<2\delta^{\eps_1}$, so that by \eqref{inw}, $t\in\bracket{A,X}_{2s_2}+O(\delta)$.
All in all, setting $s=s_1 + O_{s_1}(s_2)$, we find
\[ B_V(0,\delta^{\eps_0}) \subset \bracket{A,X}_s+B_V(0,O_{s_1,s_2}(\delta)).\]
This finishes the proof of the theorem in this case.

\noindent\underline{Third step:}
Finally, we prove that there exists $s_3 \geq 1$ depending on $V$, $\eps_0$ and $\kappa$ such that $\bracket{A,X}_{s_3}^{(\delta)} \cap V_1$ always contains a vector of length at least $\delta^{\eps_2}$, which allows to conclude, using the second step.

Let $C$ be the constant given by Lemma~\ref{inductionstep}. Let $0 < \eps_3 < \frac{\eps_2}{C}$ be a parameter whose value will be chosen later according to $\eps_2$. Let $0 < \eps_4 < \eps_3$ be a parameter whose value will be chosen later according to $\eps_3$. Using the induction hypothesis for the representation $V'$ with $\eps_4$ and $\kappa$, and replacing $\bracket{A,X}_s^{(\delta)}$ by $X$, we may assume without loss of generality that
\begin{equation}\label{eq:Beps4piX}
B_{V'}(0,\delta^{\eps_4}) \subset \pi_{V'}(X).
\end{equation}
Either $\bracket{A,X}_3 \cap V_1^{(\delta)}$ contains a vector of length $\geq \delta^{\eps_2}$, in which case we are done or $\bracket{A,X}_3 \cap V_1^{(\delta)} \subset B_V(0, \delta^{\eps_2})$. In the latter case, Lemma~\ref{inductionstep} applied with $\tau = \frac{\eps_2}{C}$ and $\eta = \eps_4$ gives a submodule $W < V$ such that the restriction of $\pi_{V'}$ to $W$ is $3\delta^{-\eps_4}$-bi-Lipschitz and
\begin{equation}\label{eq:BWpiX}
B_{W}(0,\frac{1}{2}\delta^{\eps_4}) \subset X^{(\delta_1)}
\end{equation}
where $\delta_1=\delta^{\frac{\eps_2}{C}}$. Now we apply the base case, Theorem~\ref{basecase}, to the non-trivial irreducible representation $V/W$ with $\eps_3$ and $\kappa$.
Observe that $\pi_{V'\mid W}$ being $3\delta^{-\eps_4}$-bi-Lipschitz implies that $\pi_{V/W \mid V_1} \colon V_1 \to V/W$ is $4\delta^{-\eps_4}$-bi-Lipschitz.
Hence, for the projections $p_{V/W}:G\to\End(V/W)$ and $p_{V_1}:G\to\End(V_1)$, we have
\[\forall \rho \geq \delta, \quad \NR{p_{V/W}(A)} \geq \delta^{O(\eps_4)} \NR{p_{V_1}(A)} \geq \delta^{O(\eps_4) + \eps}\rho^\kappa.\]
Therefore, provided $\eps_4$ and $\eps$ are small enough in terms of $V_1$, $\eps_3$ and $\kappa$, Theorem~\ref{basecase} yields some constant $s \geq 1$ depending only on $V/W$, $\kappa$ and $\eps_3$ such that
\begin{equation*}
B_{V/W}(0,\delta^{\eps_3}) \subset \pi_{V/W}(\bracket{A,X}_{s}) + B_{V/W}(0,\delta).
\end{equation*}
Together with inclusion~\eqref{eq:BWpiX}, this implies that
\[ N\bigl(\bracket{A,X}_{s} + X,\delta_1\bigr) \gg (\delta_1^{-1}\delta^{\eps_3})^{\dim V/W} (\delta_1^{-1}\delta^{\eps_4})^{\dim W} \geq \delta_1^{-\dim V}\delta^{O(\eps_3)}.\]

Cutting $\bracket{A,X}_{s+1}$ into cylinders of axis $V_1$ and diameter $\delta^{\eps_3}$ and picking the part with largest size, we see that
\[N(\bracket{A,X}_{2s + 2}\cap V_1^{(\delta^{\eps_3})},\delta_1) \geq  \delta_1^{-\dim V}\delta^{O(\eps_3)},\]
which ensures that $X' := \bracket{A,X}_{2s + 2}\cap V_1^{(\delta^{\eps_3})}$ is $\delta^{O(\eps_3)}$-away from proper linear subspaces and a fortiori from submodules. Remembering \eqref{eq:Beps4piX}, we know that
\[\pi_{V'}(X') = B_{V'}(0,\delta^{\eps_3}).\]

At this stage apply Lemma~\ref{inductionstep} to the set $X'$ with $\tau = \frac{\eps_2}{C}$ and $\eta = \eps_3$. If $\eps_3$ is chosen sufficiently small compared to $\eps_2$, conclusion~\ref{eq:XsubmoduleV1} fails while all assumptions except~\ref{it:AXWinB} are satisfied.
So there must be $v\in\bracket{A,X^{\prime}}_3 \cap V_1^{(\delta)}$ with $\|v\| > \delta^{\eps_2}$.
This concludes the proof of the theorem.
\end{proof}

\section{A product theorem for perfect Lie groups}
\label{sec:perfect}

The goal of this section is to use Theorem~\ref{spclassp} to prove Theorem~\ref{producttheorem0}. More precisely, we prove the following essentially equivalent version of Theorem~\ref{producttheorem0}, which is a bounded generation statement.

\begin{theorem}[Product theorem in perfect Lie groups]
\label{producttheorem}
Let $G$ be a connected perfect Lie group.
There exists a neighborhood $U$ of the identity in $G$ such that given $\kappa > 0$ and $\eps_0>0$, there exist $\eps>0$ and $s \geq 1$ such that the following holds for $\delta>0$ sufficiently small.
Let $A$ be a subset of $U$ such that:
\begin{enumerate}
\item \label{it:richinSfactors} For any projection $\pi_i:G\to G/H_i$ to a simple factor,
\[\forall \rho \geq \delta, \quad \NR{\pi_i(A)} \geq \delta^\eps\rho^{-\kappa};\]
\item \label{it:AawayH} $A$ is $\delta^\eps$-away from closed connected subgroups in $G$.
\end{enumerate}
Then
\[ B_G(1,\delta^{\eps_0}) \subset (A\cup \{1\} \cup A^{-1})^s B_G(1,\delta).\]
\end{theorem}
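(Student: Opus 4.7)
The plan is to deduce Theorem~\ref{producttheorem} from Theorem~\ref{spclassp} applied to the adjoint representation $V=\g$ of $G$ on its Lie algebra. First I would shrink the neighborhood $U$ so that $\exp\colon B_\g(0,r)\to U$ is a diffeomorphism with inverse $\log$, set $X=\log(A)\subset B_\g(0,r)$, verify the hypotheses of Theorem~\ref{spclassp} for $V=\g$, $A$, and $X$, and finally translate the conclusion back to $G$ via the exponential.

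Conditions (ii) and (iii) of Theorem~\ref{spclassp} should follow from hypothesis (ii) of Theorem~\ref{producttheorem} via the bi-Lipschitz correspondence between closed connected subgroups of $G$ and Lie subalgebras of $\g$ near $1$: identity components of proper stabilizers are proper closed connected subgroups, and proper $G$-submodules of $\g$ are exactly proper ideals, corresponding to proper closed connected normal subgroups of $G$. For $\g\in\cP(G)$, a trivial simple quotient in a Jordan-Hölder series of the adjoint module $\g$ would give a proper ideal $I\subset\g$ with $\g/I$ having nonzero center; iterating this (pulling back $1$-dimensional central subspaces) eventually produces a nonzero abelian quotient of $\g$, contradicting $[\g,\g]=\g$.

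For hypothesis (i) of Theorem~\ref{spclassp}, fix any Jordan-Hölder series $0=V_0<\dotsb<V_\ell=\g$. Each irreducible quotient $V_{i+1}/V_i$ factors through $G/H_i$ for some proper closed connected normal subgroup $H_i$; using the Levi decomposition $G=S\ltimes R$ together with perfectness, one should be able to show that $G/H_i$ is isogenous to a product of the simple factors of $G/R$ appearing in hypothesis (i) of Theorem~\ref{producttheorem}, so that the required covering number bound $N(p_{V_{i+1}/V_i}(A),\rho)\geq\delta^{O(\eps)}\rho^{-\kappa}$ transfers via a bi-Lipschitz projection.

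Theorem~\ref{spclassp} then produces the inclusion $B_\g(0,\delta^{\eps_0})\subset\bracket{A,X}_s+B_\g(0,\delta)$. A generic element $v=\sum_j\epsilon_j\Ad(w_j)(x_j)$ of $\bracket{A,X}_s$ is realized, via Baker-Campbell-Hausdorff, as $\log$ of the group product $P_v=\prod_j w_j\exp(\epsilon_j x_j)w_j^{-1}$, which is a word of length $O(s^2)$ in $A\cup A^{-1}\cup\{1\}$; controlling BCH errors then yields the desired inclusion $B_G(1,\delta^{\eps_0})\subset(A\cup\{1\}\cup A^{-1})^{s'}B_G(1,\delta)$. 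The main obstacle I anticipate is precisely this BCH translation: a small combination $v$ of norm $\delta^{\eps_0}$ may be a cancelling sum of individual terms of much larger norm, and such cancellations do not survive the non-commutative exponential, so careful scale-by-scale bookkeeping (possibly by bootstrapping the conclusion from a coarser scale, or by extracting from the proof of Theorem~\ref{spclassp} combinations whose individual terms remain comparable to $\|v\|$) will likely be required.
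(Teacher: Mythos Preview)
Your approach has a genuine gap: the claim that the adjoint representation $\g$ lies in $\cP(G)$ for every perfect $G$ is false. Your argument for it (``a trivial simple quotient gives an ideal $I$ with $\g/I$ having nonzero center; iterating this eventually produces a nonzero abelian quotient'') does not go through: from a nonzero center in some quotient $\g/I$ you cannot manufacture an abelian quotient of $\g$ itself. The paper gives an explicit counterexample in the remark following Lemma~\ref{abelianextension}: $\g=\mathfrak{sl}(2,\R)\ltimes\cF_{2,2}$ (with $\cF_{2,2}$ the Heisenberg algebra) is perfect, but its one-dimensional center is a trivial simple subquotient of the adjoint module, so $\g\notin\cP(G)$. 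Consequently Theorem~\ref{spclassp} cannot be applied to the adjoint representation in general, and your strategy only covers the special case where the radical $\rf$ is abelian (where Lemma~\ref{abelianextension} does give $\g\in\cP(G)$).

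The paper's proof treats exactly that abelian-radical case first, essentially along your lines, and then reaches the general case by a separate induction on the nilpotency class of the radical $R$: one applies the result to $G/R_\ell$ (where $R_\ell$ is the last term of the lower central series), produces a ball in $G$ modulo $R_\ell$, and fills in the missing $R_\ell$-direction using the commutator identity of Lemma~\ref{lm:BRl1}, namely $B_{R_\ell}(1,\rho^2)\subset[B_R(1,\rho),B_{R_{\ell-1}}(1,\rho)]^k$. This nilpotent bootstrapping step is the missing idea in your plan.

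On the BCH issue you flag at the end: your concern is correct and is precisely why the paper does \emph{not} take $X=\log A$. Instead it sets $X=\log\bigl(A^{-1}A\cap B_G(1,\delta^\eps)\bigr)$, so that every term $\Ad(a_i)y_i$ already has norm $O(\delta^\eps)$; the higher-order BCH remainder in Lemma~\ref{lm:BCHnormed} is then $O(\delta^{\ell\eps})\le O(\delta)$ once $\ell>1/\eps$, and no delicate cancellation bookkeeping is needed.
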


Theorem~\ref{producttheorem0} follows immediately from Theorem~\ref{producttheorem} in combination with Ruzsa-type inequality \cite[Theorem 6.8]{taoestimates}.

The proof of Theorem~\ref{producttheorem} goes as follows.
We shall first prove the special case where the radical of our perfect Lie group $G$ is abelian.
In this case, the adjoint representation of $G$ belongs to $\cP(G)$, as we shall see in Lemma~\ref{abelianextension} below.
So Theorem~\ref{spclassp} applies and shows that we can produce in the Lie algebra $\g$ of $G$ a large ball using addition and the adjoint action of $G$: $B_\g(0,\delta^{\eps_0}) \subset \bracket{A,\log A}_s^{(\delta)}$.
Then we want to exponentiate this inclusion to the level of the group $G$.
For that, we use the Campbell-Hausdorff formula, which allows us to approximate sums in $\g$ by products in $G$ with any desired precision; this is the content of Lemma~\ref{lm:BCHnormed}.
Finally, to deduce the general case from the special case, we shall use an induction on the nilpotency class of the radical of $G$.


\subsection{Perfect Lie algebras and Lie groups}
\label{subsec:plag}
We begin by recording some elementary facts about perfect Lie groups and Lie algebras.

Let $G$ be a connected Lie group with Lie algebra $\g$.
Using Levi's decomposition theorem \cite[Corollary~1, p. 49]{serrelalg}, we may write $\g$ as a semi-direct product $\g=\s\ltimes\rf$ of a semi-simple Lie algebra $\s$ and a solvable radical $\rf$.
Writing $\s=\s_1\oplus \dotsb \oplus \s_k$ as a sum of simple ideals, one sees that for each $i$ in $\{1,\dots,k\}$, $\h_i=(\oplus_{j\neq i}\s_j)\ltimes\rf$ is an ideal in $\g$.
The Lie algebra $\h_i$ is the Lie algebra of a closed normal subgroup $H_i\lhd G$.
The projection maps $\pi_i:G\to G/H_i$ are the \emph{projections of $G$ to its simple factors}.
Note that any left-invariant Riemannian metric $d$ on $G$ induces a left-invariant metric on $G/H_i$.
Indeed, if $N\lhd G$ is any closed normal subgroup, one defines a distance on the quotient $G/N$ by
\[\forall x,y\in G,\quad d(\bar{x},\bar{y})= \inf_{n,n' \in N} d(xn,yn') = d(y,xN) = d(x^{-1}y, N).\]

For later use, we now list three elementary and standard lemmas about perfect Lie algebras.

\begin{lemma}\label{lm:radnil}
If $\g$ is a perfect Lie algebra, then its solvable radical $\rf$ is nilpotent.
In particular, $\g$ can be written as a semi-direct product $\g=\s\ltimes\rf$ of a semi-simple Lie algebra $\s$ with a \emph{nilpotent} ideal $\rf$.
\end{lemma}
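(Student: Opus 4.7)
The plan is to combine the classical inclusion $[\g,\rf]\subseteq\nf(\g)$ (where $\nf(\g)$ denotes the nilradical of $\g$, and where characteristic zero is used) with the perfectness hypothesis $\g=[\g,\g]$ and Levi's decomposition. The inclusion $[\g,\rf]\subseteq\nf(\g)$ is itself a textbook consequence of Lie's theorem applied, after complexification, to the adjoint action $\ad\colon\g\to\End(\g)$: $\ad(\rf)$ admits a common upper-triangularization, which forces $\ad([\g,\rf])$ to be strictly upper-triangular, hence nilpotent, so that $[\g,\rf]$ acts nilpotently on $\g$ and is therefore contained in the nilradical.

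Granting this, I would proceed as follows. First, use Levi's theorem to write $\g=\s\oplus\rf$ as vector spaces, with $\s$ semisimple and $\rf$ the solvable radical. Expanding the bracket bilinearly and using $[\s,\s]=\s$, one obtains
\[ \g \;=\; [\g,\g] \;=\; \s \,+\, [\s,\rf] \,+\, [\rf,\rf]. \]
Since $\rf$ is an ideal, both $[\s,\rf]$ and $[\rf,\rf]$ lie in $\rf$; comparing $\rf$-components in the direct sum $\g=\s\oplus\rf$ then yields
\[ \rf \;=\; [\s,\rf] \,+\, [\rf,\rf] \;\subseteq\; [\g,\rf]. \]
Combined with $[\g,\rf]\subseteq\nf(\g)$, this forces $\rf\subseteq\nf(\g)$, and since conversely $\nf(\g)\subseteq\rf$ always, we conclude that $\rf=\nf(\g)$ is nilpotent. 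The ``in particular'' half of the statement is then immediate from Levi.

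The only genuinely non-elementary input is the inclusion $[\g,\rf]\subseteq\nf(\g)$; since this is a standard result I would simply cite a Lie algebra textbook rather than reprove it, and the rest is a one-step manipulation of the Levi decomposition. Thus I anticipate no substantive obstacle beyond pointing the reader to an appropriate reference for the nilradical inclusion.
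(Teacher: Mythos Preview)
Your argument is correct, and in fact more detailed than the paper's own treatment: the paper simply cites an external reference (\cite[Lemma~2.4]{benoistsaxce_convolution}) for this standard fact and gives no proof. Your route via the textbook inclusion $[\g,\rf]\subseteq\nf(\g)$ combined with the Levi decomposition is a clean and standard way to see it.

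One minor caveat worth noting: the heuristic you sketch for the inclusion $[\g,\rf]\subseteq\nf(\g)$ is not quite right as stated. Upper-triangularizing $\ad(\rf)$ only gives that $\ad([\rf,\rf])$ is strictly upper-triangular; for $x\in\g$ and $r\in\rf$ one has $\ad([x,r])=[\ad(x),\ad(r)]$, and $\ad(x)$ need not be upper-triangular in the chosen basis. The correct argument (e.g.\ Bourbaki, \emph{Lie Groups and Lie Algebras}, Ch.~I, \S5) shows instead that on each simple subquotient of $\g$ the ideal $\rf$ acts by scalars, whence $[\g,\rf]$ acts by zero on every simple subquotient and thus nilpotently on $\g$. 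Since you already plan to cite a textbook for this inclusion rather than reprove it, this does not affect the validity of your proof.
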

\begin{proof}
See for instance \cite[Lemma~2.4]{benoistsaxce_convolution}.
\end{proof}

\begin{lemma}\label{lm:algFrattini}
Let $\g$ be a perfect Lie algebra, with Levi decomposition $\g=\s\ltimes\rf$.
The image of a proper ideal of $\g$ under the map $\g \to \g / \rf$ is a proper ideal.
In particular, the image of a maximal proper ideal is a maximal proper ideal.
\end{lemma}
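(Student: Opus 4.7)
The plan is to argue by contradiction using the nilpotence of $\rf$ (supplied by Lemma~\ref{lm:radnil}) together with the perfectness hypothesis $\g = [\g,\g]$. Let $\mathfrak{i}$ be a proper ideal of $\g$ and let $\pi \colon \g \to \g/\rf \cong \s$ denote the quotient map. Assume for contradiction that $\pi(\mathfrak{i}) = \s$, equivalently that $\mathfrak{i} + \rf = \g$. I will use the lower central series $\rf^1 = \rf$, $\rf^{k+1} = [\rf,\rf^k]$, which terminates at $0$ in finitely many steps by Lemma~\ref{lm:radnil}.

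The key step is to show by induction on $k \geq 1$ that $\mathfrak{i} + \rf^k = \g$. The base case $k=1$ is the standing assumption. For the inductive step, assuming $\mathfrak{i} + \rf^k = \g$, the perfectness of $\g$ gives
\[
\g \;=\; [\g,\g] \;=\; [\mathfrak{i}+\rf^k,\,\mathfrak{i}+\rf^k] \;\subset\; [\mathfrak{i},\mathfrak{i}] + [\mathfrak{i},\rf^k] + [\rf^k,\mathfrak{i}] + [\rf^k,\rf^k] \;\subset\; \mathfrak{i} + \rf^{k+1},
\]
where the last three brackets involving $\mathfrak{i}$ land in $\mathfrak{i}$ because $\mathfrak{i}$ is an ideal, and $[\rf^k,\rf^k] \subset [\rf,\rf^k] = \rf^{k+1}$. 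Picking $k$ large enough so that $\rf^k = 0$ gives $\mathfrak{i} = \g$, contradicting that $\mathfrak{i}$ is proper. This proves the first assertion.

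For the second assertion, let $\mathfrak{i}$ be a maximal proper ideal of $\g$ and set $\bar{\mathfrak{i}} = \pi(\mathfrak{i})$, which we already know is a proper ideal of $\s$. If some ideal $\mathfrak{j}$ of $\s$ satisfied $\bar{\mathfrak{i}} \subsetneq \mathfrak{j} \subsetneq \s$, then its preimage $\mathfrak{j}' = \pi^{-1}(\mathfrak{j})$ would be an ideal of $\g$ containing both $\mathfrak{i}$ and $\rf$; the strict inclusions $\bar{\mathfrak{i}} \subsetneq \mathfrak{j} \subsetneq \s$ lift under $\pi$ to $\mathfrak{i} \subsetneq \mathfrak{j}' \subsetneq \g$, contradicting the maximality of $\mathfrak{i}$. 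Hence $\bar{\mathfrak{i}}$ is a maximal proper ideal of $\s$.

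There is no real obstacle here; the argument is a standard Frattini-type lemma for perfect Lie algebras, and the only ingredient beyond elementary bracket manipulation is the nilpotence of $\rf$ to guarantee that the induction terminates.
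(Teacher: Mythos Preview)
Your proof is correct and follows essentially the same scheme as the paper's: assume $\mathfrak{i}+\rf=\g$, use $[\g,\g]=\g$ to push $\rf$ down a descending series by induction, and conclude $\mathfrak{i}=\g$ once the series reaches $0$. The only difference is that you descend along the lower central series of $\rf$ (invoking nilpotence from Lemma~\ref{lm:radnil}), whereas the paper descends along the derived series, which needs only the solvability of $\rf$ and so avoids the appeal to Lemma~\ref{lm:radnil}; also, in your displayed chain it is the \emph{first} three brackets, not the last three, that involve $\mathfrak{i}$.
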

\begin{proof}
Let $\nf$ be an ideal in $\g$ such that $\nf + \rf = \g$.
We want to show that $\nf=\g$.
Denote by $\Derive^i \rf$, $i \geq 0$ the derived series of $\rf$, i.e. $\Derive^0 \rf = \rf$ and $\Derive^{i + 1}\rf = [\Derive^i \rf; \Derive^i \rf]$, $\forall i \geq 0$.
We show by induction that $\forall i \geq 0$,
\begin{equation}\label{eq:g=n+Dr}
\g = \nf + \Derive^i\rf.
\end{equation}
Indeed, \eqref{eq:g=n+Dr} is true for $i = 0$.
Suppose that it is true for some $i \geq 0$; then it follows from $[\g,\g] = \g$ that
\[\g = [\nf,\nf] + [\nf,\Derive^i\rf] + [\Derive^i\rf,\Derive^i\rf] \subset \nf + \Derive^{i+1}\rf,\]
because $\nf$ is an ideal in $\g$.
Since $\rf$ is solvable, we may take $i$ such that $\Derive^i\rf=0$ to conclude that $\nf=\g$.
\end{proof}

\begin{lemma}[Perfect abelian extension of a semi-simple group]
\label{abelianextension}
Let $G$ be a perfect Lie group with Lie algebra $\g$.
If the radical $\rf$ of $\g$ is abelian, then the adjoint representation of $G$ is of class $\cP$.
\end{lemma}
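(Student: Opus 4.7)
The plan is to combine the Levi decomposition of $\g$ with part (i) of Proposition~\ref{pr:subquo}, by treating the semisimple Levi quotient and the abelian radical separately.

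First I would write $\g = \s\ltimes \rf$ using Levi's theorem together with Lemma~\ref{lm:radnil}, and observe that $\rf$ is a $G$-submodule of $\g$ (under the adjoint action) because it is an ideal, and that the quotient $G$-module $\g/\rf$ is isomorphic as a $G$-module to $\s$ with the action inherited from the adjoint action. By Proposition~\ref{pr:subquo}(\ref{it:subquo}), it then suffices to check that both $\rf$ and $\g/\rf$ belong to $\cP(G)$.

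For the quotient $\g/\rf\cong \s$: since $\rf$ is an ideal, $[\rf,\s]\subset\rf$, so $\rf$ acts trivially on $\g/\rf$. The adjoint action therefore factors through the semisimple quotient $G/\!\exp\rf$, and by Proposition~\ref{pr:subquo}(\ref{it:asGmodHrep}) we only need to check the corresponding property for the adjoint representation of a semisimple group. Decomposing $\s=\s_1\oplus\dots\oplus\s_k$ into simple ideals produces a Jordan-Hölder filtration whose simple quotients are the $\s_i$, and each $\s_i$, being the adjoint representation of a simple Lie algebra, is irreducible and non-trivial. Hence $\g/\rf\in\cP(G)$.

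For the radical $\rf$ itself, I would exploit perfectness to show that $[\s,\rf]=\rf$. Indeed, since $\rf$ is abelian, $[\g,\g]=[\s,\s]+[\s,\rf]+[\rf,\rf]=\s+[\s,\rf]$, and comparing with $\g=\s\oplus\rf$ forces $[\s,\rf]=\rf$. Now suppose, for contradiction, that some step in a Jordan-Hölder filtration of $\rf$ produces the trivial representation, i.e.\ that there is a $G$-submodule $W\subsetneq \rf$ such that $G$ acts trivially on $\rf/W$. Differentiating the action and restricting to $\s$ yields $[\s,\rf]\subset W$, so by the identity above $\rf\subset W$, contradicting $W\subsetneq \rf$. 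Therefore no trivial simple quotient appears and $\rf\in\cP(G)$, which completes the argument.

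No step looks genuinely hard; the only point where I would be careful is the passage from ``trivial quotient of the $G$-action on $\rf/W$'' to ``$[\s,\rf]\subset W$'', which requires remembering that $G$ is connected so the triviality at the group level does imply triviality of the derived $\s$-action.
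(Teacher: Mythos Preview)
Your approach is essentially the same as the paper's: split via $0\to\rf\to\g\to\g/\rf\to 0$, handle $\g/\rf$ by passing to the semisimple quotient $G/R$ and invoking Proposition~\ref{pr:subquo}\ref{it:asGmodHrep}, and treat $\rf$ using the identity $[\s,\rf]=\rf$.

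There is, however, one genuine gap in your treatment of $\rf$. Your ``i.e.'' asserts that if some simple quotient $V_{i+1}/V_i$ in a Jordan--H\"older filtration of $\rf$ is trivial, then there is a proper $G$-submodule $W\subsetneq\rf$ with $\rf/W$ trivial. This is not automatic: a trivial simple quotient sitting in the middle of a filtration need not yield a trivial \emph{top} quotient (for a non-semisimple group one can write down two-dimensional modules where the unique simple submodule is trivial but the unique simple quotient is not). Your contradiction $[\s,\rf]\subset W\subsetneq\rf$ only fires when the trivial piece is at the top.

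The fix, which the paper uses, is to note first that because $\rf$ is abelian the $G$-action on $\rf$ factors through the semisimple quotient $S=G/R$, and then to invoke Weyl's complete reducibility: $\rf$ decomposes as a direct sum of irreducible $S$-modules, so every Jordan--H\"older simple quotient is a direct summand and can indeed be realised as $\rf/W$ for some complementary $W$. With that one sentence added, your argument goes through and matches the paper's.
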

\begin{proof}[Proof of Lemma~\ref{abelianextension}]
We have an exact sequence of $G$-modules
\[ 0 \to \rf \to \g \to \g/\rf \to 0,\]
and by Proposition~\ref{pr:subquo}\ref{it:subquo}, all we need to check is that both $\rf$ and $\g/\rf$ belong to $\cP(G)$.
Let $R$ be the solvable radical of $G$; it is equal to the closed connected subgroup of $G$ with Lie algebra $\rf$.
The Lie group $G/R$ is semi-simple, so its adjoint representation belongs to $\cP(G/R)$.
By Proposition~\ref{pr:subquo}\ref{it:asGmodHrep} , $\g/\rf$ is of class $\cP$ as a representation of $G$.

On the other hand, $\rf$ is totally reducible under the action of the semisimple group $S=G/R$, and moreover,
\[ \rf = [\s,\rf],\]
because $\g$ is perfect and $\rf$ abelian.
This implies that $\rf$ is a representation of class $\cP$ for $S$, and therefore for $G$ by Proposition~\ref{pr:subquo}~\ref{it:asGmodHrep}.
\end{proof}

\begin{remark}
If $G$ is not perfect, then $\g/[\g,\g]$ is non-zero, and $G$ acts trivially on $\g/[\g,\g]$, so that the adjoint representation does not belong to $\cP(G)$.
\end{remark}

\begin{remark}
It is not true in general that the adjoint representation of a perfect connected Lie group is of class $\cP$. Indeed, there exist perfect Lie algebras with non-trivial centers.
For instance, let $\cF_{2,2}$ denote the free 2-nilpotent Lie algebra over 2 generators $\mathsf{x},\mathsf{y}$. It is the Lie algebra of the Heisenberg group $H_3(\R)$. The action of $\SL(2,\R)$ on $\cF_{2,2}$ by linear substitution integrates to an action of $\SL(2,\R)$ on $H_3(\R)$ by group automorphisms. This allows us the define the Lie group $G = \SL(2,\R) \ltimes H_3(\R)$. Its Lie algebra $\g=\mathfrak{sl}(2,\R)\ltimes\cF_{2,2}$ is perfect. However, the adjoint representation of $G$ is not of class $\cP$, because $G$ acts trivially on the center of $\g$, generated by $[\mathsf{x},\mathsf{y}]$.
\end{remark}

\subsection{Abelian extensions of semi-simple groups}

Here, we prove Theorem~\ref{producttheorem} in the case where the Lie algebra of $G$ can be written as a semi-direct product $\g=\s\ltimes\rf$, with $\rf$ abelian. We shall see in \ref{subsec:rad} that the general case follows from this one.

We fix a connected perfect Lie group $G$ with Lie algebra $\g=\s\ltimes\rf$, where $\s$ is semi-simple and $\rf$ is an abelian ideal.
To prove Theorem~\ref{producttheorem} in this case, the idea is to apply Theorem~\ref{spclassp} to the adjoint representation of $G$ on its Lie algebra, and then to use the Campbell-Hausdorff formula.
Before that, we note that condition \ref{it:richinSfactors} in Theorem~\ref{producttheorem} automatically implies non-concentration for the image of $A$ under any non-trivial group homomorphism.

\begin{lemma}\label{lm:phiArich}
Let $G$ be a perfect connected Lie group.
Given a non-trivial homomorphism $\phi \colon G \to H$ to some connected Lie group $H$, there exists a neighborhood $U$ of the identity in $G$ such that the following holds.
Let $\eps > 0$ and $\kappa > 0$ be parameters and let $A \subset U$ be a subset satisfying condition~\ref{it:richinSfactors} of Theorem~\ref{producttheorem}.
Then
\[\forall \rho \geq \delta, \quad \NR{\phi(A)} \gg_{\phi} \delta^\eps\rho^{-\kappa}.\]
\end{lemma}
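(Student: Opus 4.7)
The plan is to extract from the perfectness of $G$ a simple-factor projection $\pi_i$ through which $\phi$ admits a local right inverse, and then to transfer the covering-number lower bound on $\pi_i(A)$ to $\phi(A)$ by a Lipschitz comparison.

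The algebraic input is Lemma~\ref{lm:algFrattini}. Since $\phi$ is non-trivial and $G$ is connected, the Lie algebra $\mathfrak{k}$ of $\ker\phi$ is a proper ideal of $\g$. By Lemma~\ref{lm:algFrattini}, its image in the semisimple quotient $\g/\rf = \s_1 \oplus \dotsb \oplus \s_k$ is still a proper ideal of $\s$, hence contained in some maximal proper ideal $\bigoplus_{j \neq i}\s_j$. Pulling back yields $\mathfrak{k} \subset \h_i$; equivalently, the identity component $(\ker\phi)^\circ$ is contained in the closed normal subgroup $H_i$.

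Next I shrink the neighborhood $U$ of Theorem~\ref{producttheorem} so that (a)~$\ker\phi \cap U^{-1}U \subset (\ker\phi)^\circ$, which is possible because $(\ker\phi)^\circ$ is open in $\ker\phi$, and (b)~$(\ker\phi)^\circ \cap U^{-1}U \subset H_i$, which follows from $\mathfrak{k} \subset \h_i$ via the local diffeomorphism property of the exponential map. For $a,a' \in U$, the equality $\phi(a) = \phi(a')$ then forces $a^{-1}a' \in H_i$, so $\pi_i(a) = \pi_i(a')$. Consequently, the formula $\bar\pi_i(\phi(a)) := \pi_i(a)$ defines an unambiguous map $\bar\pi_i \colon \phi(U) \to G/H_i$. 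This map is the restriction of a smooth local morphism of Lie groups (locally identifying $\phi(G) \subset H$ with $G/\ker\phi$ and then quotienting by $H_i/\ker\phi$), and is therefore $L$-Lipschitz on $\phi(U)$ for some constant $L = L(\phi) \geq 1$, after invoking the standard local bi-Lipschitz comparison between the ambient metric on $\phi(G) \subset H$ and the quotient metric on $G/\ker\phi$.

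To conclude, any minimal $\rho$-cover of $\phi(A)$ pushes forward through $\bar\pi_i$ to a cover of $\pi_i(A) = \bar\pi_i(\phi(A))$ by $L\rho$-balls, so that for every $\rho \geq \delta$,
\[ N(\phi(A),\rho) \;\geq\; N(\pi_i(A), L\rho) \;\geq\; \delta^{\eps}(L\rho)^{-\kappa} \;\gg_\phi\; \delta^{\eps}\rho^{-\kappa}, \]
which is the claimed estimate. The only conceptual step is the inclusion $\mathfrak{k} \subset \h_i$, and this is precisely where the perfectness of $G$ is essentially used; without it (e.g.\ for $\phi$ the abelianization map on a non-perfect $G$), the statement itself would fail, so no cheaper argument can exist. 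The remaining verifications are routine manipulations of left-invariant metrics, quotient maps, and Lipschitz constants.
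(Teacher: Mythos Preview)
Your proof is correct and follows essentially the same route as the paper's: both use Lemma~\ref{lm:algFrattini} to find a simple-factor projection $\pi_i$ factoring through $\phi$ (equivalently, $\mathfrak{k}\subset\h_i$), shrink $U$ so that only the identity component of $\ker\phi$ is visible, and then transfer the covering estimate via a Lipschitz comparison. The paper streamlines the metric step by first reducing to $H=G/\ker\phi$ and then using that the quotient map $G/\ker\phi \to G/H_i$ is automatically $1$-Lipschitz for the induced quotient metrics, whereas you construct the factor map $\bar\pi_i$ explicitly and invoke a local bi-Lipschitz comparison; the substance is the same.
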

\begin{proof}
The isomorphism $G/\ker \phi \to \phi(G)$ is bi-Lipschitz when restricted to compact neighborhoods. Hence without loss of generality, we may assume that $H = G/\ker \phi$. Since $\ker \phi$ is closed, there exists a neighborhood $U$ of the identity in $G$, such that $\forall x, y$, $d(x^{-1}y,\ker\phi) = d(x^{-1}y,(\ker\phi)^{\circ})$. This allows us to further assume that $\ker \phi$ is connected.

Let $\nf$ be a maximal proper ideal of $\g$ containing the Lie algebra of $\ker\phi$.
By Lemma~\ref{lm:algFrattini}, $\nf$ is exactly the kernel of the projection of $\g$ to one of its simple factors.
It follows $\nf$ is the Lie algebra of a proper closed normal subgroup $N\lhd G$, with $G/N$ one of the simple factors of $G$.
We deduce the desired estimate from condition~\ref{it:richinSfactors} of Theorem~\ref{producttheorem} by using the fact that $G/\ker \phi \to G/N$ is $1$-Lipschitz.
\end{proof}

\begin{proof}[Proof of Theorem~\ref{producttheorem}, in the case where $\rf$ is abelian]
In this proof, implied constants in Landau and Vinogradov notations depend on $G$ and on the parameter $\kappa$.

By Lemma~\ref{abelianextension}, the adjoint representation of $G$ on $\g$ is of class $\cP$.
Setting $X = \log (A^{-1}\!A\cap B_G(1,\delta^{\eps})) \subset \g$, the hypotheses of Theorem~\ref{spclassp} are all met with $\eps$ replaced by $O(\eps)$.
Indeed, assumption~\ref{it:piArich} is guaranteed by Lemma~\ref{lm:phiArich}, and $A$ being a $\delta^\eps$-away from subgroups is exactly assumption~\ref{it:AawayH} of Theorem~\ref{producttheorem}.
So it remains to check that $X$ is $\delta^{O(\eps)}$-away from any proper submodule $W$ in $\g$.
We may assume that $W$ is maximal.
Then, it is a maximal proper ideal of $\g$, which by Lemma~\ref{lm:algFrattini} is equal to the kernel $H_i$ of some projection $\pi_i:\g\to \g/\h_i$ of $G$ on a simple factor.
In particular, there are only finitely many such $W$.
Shrinking the neighborhood $U$ if necessary, it suffices to check that $A^{-1}\!A\cap B_{G}(1,\delta^{\eps})$ is $\delta^{O(\eps)}$-away from $H_i$.
By assumption~\ref{it:richinSfactors}, for any $\rho \geq \delta$,
\begin{align*}
N(\pi_i(A^{-1}\!A\cap B_G(1,\delta^{\eps})),\rho)& \geq \max_g N(\pi_i(A\cap B_G(g,\delta^{\eps})),\rho)\\
& \geq \delta^{O(\eps)}N(\pi_i(A),\rho)\\
& \geq \delta^{O(\eps)}\rho^{-\kappa}.
\end{align*}
The last quantity is larger than $1$ if we choose $\rho = \delta^{C\eps}$ with a large constant $C = O(1)$. This shows that $A^{-1}\!A\cap B_G(1,\delta^{\eps})$ is $\delta^{O(\eps)}$-away from $\ker \pi_i$.

Thus, we can apply Theorem~\ref{spclassp} to get an integer $s \geq 1$ such that
\begin{equation}\label{ballinlie}
B_\g(0,\delta^{\eps_0}) \subset \bracket{A,X}_s+B_{\g}(0,\delta)
\end{equation}
when $\eps$ is small enough.

The idea is now to apply the Campbell-Hausdorff formula at an order $\ell$ such that the error term is of size at most $\delta$. We identify an element $w$ of the free group $F_s$ generated by $s$ elements and the word map $w:G^{\times s} \to G$ it induces.
If $x,y$ are elements in $\g$, we want to approximate $e^{x+y}$ by a word in $e^x, e^y$.
For example, with a remainder term of order 2, $e^{x+y}=e^xe^ye^{O(\|x\|^2+\|y\|^2)}$.
In order to get a remainder term of order 3, it is easier to approximate $e^{2(x+y)}$, and then, we get $e^{2(x+y)}=(e^x)^2(e^y)^2(e^y)^2e^x(e^y)^{-2}(e^x)^{-1}e^{O(\|x\|^3+\|y\|^3)}.$
We shall use the following lemma, which generalizes these elementary computations, and follows from the Campbell-Hausdorff formula.

\begin{lemma}
\label{lm:BCHnormed}
Let $\exp \colon \g \to G$ denote the exponential map of a Lie group. We fix a Euclidean norm on $\g$ and endow $G$ with the associated left-invariant Riemannian metric.
For all integers $s \geq 1$ and $\ell \geq 1$, there exists an integer $C \geq 1$, a word map $w \in F_s$ and a neighborhood $U$ of $0$ in $\g$ such that for all $x_1, \dotsc, x_s \in U$,
\[d\bigl(\exp(Cx_1 + \dotsb + Cx_s), w(\exp x_1,\dotsc,\exp x_s)\bigr) \ll_\ell (\norm{x_1} + \dotsb + \norm{x_s})^\ell.\]
\end{lemma}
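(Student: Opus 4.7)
The plan is to argue by induction on $\ell$ using the Baker–Campbell–Hausdorff (BCH) formula. For the base case $\ell = 2$, the word $w = t_1 t_2 \dotsm t_s$ with $C = 1$ works: by BCH on a small enough neighborhood,
\[(\exp x_1)\dotsm(\exp x_s) = \exp\bigl(x_1 + \dotsb + x_s + O(\|x\|^2)\bigr),\]
and since $\exp$ is locally bi-Lipschitz, this gives the required distance estimate.

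For the inductive step, suppose that for some $\ell \geq 2$ I have already produced an integer $C_\ell$ and a word $w_\ell \in F_s$ such that, for $x_i$ in a suitable neighborhood $U_\ell$ of $0$,
\[\log\bigl(w_\ell(\exp x_1,\dotsc,\exp x_s)\bigr) = C_\ell(x_1 + \dotsb + x_s) + R(x_1,\dotsc,x_s)\]
with $\|R\| \ll \|x\|^\ell$. Repeated application of BCH shows that the left-hand side, expanded in power series in the $x_i$, is a Lie series, so $R = Q + O(\|x\|^{\ell+1})$ where $Q$ is a homogeneous Lie polynomial of degree $\ell$ in $x_1,\dotsc,x_s$ with rational coefficients. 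To kill $Q$, I express it as a rational linear combination of length-$\ell$ iterated brackets of the $x_i$, then pick a positive integer $D$ clearing the denominators, so that $DQ$ has integer coefficients in this basis. Each iterated bracket $[x_{i_1},[x_{i_2},[\dotsc,x_{i_\ell}]\dotsc]]$ is realized by the corresponding iterated group commutator evaluated on the $\exp x_{i_j}$: a sub-induction on $\ell$ using BCH shows that this commutator equals $\exp$ of the iterated bracket plus an error of order $\ell + 1$. Concatenating these commutator words with appropriate integer exponents (using inverses for negative coefficients) to reproduce the integer coefficients of $-DQ$ yields a word $u \in F_s$ with
\[u(\exp x_1,\dotsc,\exp x_s) = \exp\bigl(-DQ + O(\|x\|^{\ell+1})\bigr).\]

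Finally, setting $C_{\ell+1} = DC_\ell$ and $w_{\ell+1} = w_\ell^D u$ closes the induction. Indeed, raising to the $D$-th power multiplies the logarithm, giving $\log(w_\ell^D(\exp x_i)) = DC_\ell(x_1 + \dotsb + x_s) + DQ + O(\|x\|^{\ell+1})$; concatenating with $u$ and invoking BCH once more introduces only commutator corrections of order $\geq \ell + 1$, since the first factor has linear leading term and the second has leading order $\ell$. Hence
\[\log\bigl(w_{\ell+1}(\exp x_1,\dotsc,\exp x_s)\bigr) = C_{\ell+1}(x_1 + \dotsb + x_s) + O(\|x\|^{\ell+1}).\]
The main technical point is justifying that $R$ is a Lie series, so that $Q$ lives in the free Lie algebra and can be reproduced by group commutators; this is the classical consequence of BCH that $\log$ of any word in exponentials is a Lie series in the arguments. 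With this in hand, clearing denominators through the scaling constant $D$ is routine.
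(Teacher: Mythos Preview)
Your proof is correct and follows essentially the same inductive strategy as the paper: BCH for the base case, extraction of the degree-$\ell$ Lie-polynomial remainder, realization of iterated brackets by iterated group commutators, and clearing of rational denominators. The only cosmetic difference is that the paper clears denominators by rescaling the inputs (replacing $x_i$ by $x_i/C$), whereas you do it by raising the word to a power $w_\ell \mapsto w_\ell^D$; these are equivalent bookkeeping devices.
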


\begin{proof}
Consider $\g$-valued functions $f$ defined on a neighborhood of $0$ in $\g^{\times s}$ that can be written as a sum of a convergent series
\[f(x_1,\dotsc,x_s) = \sum_{k = 1}^{+\infty} f_k(x_1,\dotsc,x_s)\]
where for each $k$, $f_k(x_1,\dotsc,x_s)$ is a $\Q$-linear combination of repeated brackets $[x_{i_1}, \dotsc, x_{i_k}] = [x_{i_1},[x_{i_2},\dotsc,[x_{i_{\ell-1}},x_{i_k}]\dots]]$ of length $k$. The series converges on $B_\g(0,r)^{\times s}$ for some $r > 0$ in the sense that the numerical series obtained by replacing each repeated bracket of length $k$ by $r^k$ and each coefficient by its absolute value is convergent. Identifying two such functions if they agree on a neighborhood of $0$, we get a linear space $\cG_s$ over $\Q$. Equipped with its obvious Lie bracket, $\cG_s$ is a graded Lie algebra over $\Q$. For $\ell \geq 1$, we write $O(d^\circ \geq \ell)$ to denote an unspecified element in $\cG_s$ of valuation at least $\ell$.

By the Baker-Campbell-Hausdorff formula \cite{dynkin}, the map defined by $(x,y) \mapsto x * y = \log(\exp(x)\exp(y))$ belongs to $\cG_2$ and moreover.
\begin{equation}\label{eq:BCH}
x * y = x + y + \frac{1}{2}[x,y] + O(d^\circ \geq 3).
\end{equation}
From that we deduce, by induction on $s$, that
\begin{equation}\label{eq:BCHorder1}
x_1 * \dotsb * x_s = x_1 + \dotsb + x_s + O(d^\circ \geq 2).
\end{equation}
We denote by $[x,y]_*$ the group commutator $x * y * (-x) * (-y)$ and by $[x_1,\dotsc,x_s]_*$ the repeated group commutator $[x_1, [x_2 \dotsc,[x_{s-1},x_s]_*\dots]_*]_*$. We have by \eqref{eq:BCH},
\[[x,y]_* = [x,y] + O(d^\circ \geq 3)\]
and again by induction on $s$,
\begin{equation}\label{eq:*commutators}
[x_1,\dotsc,x_s]_* = [x_1,\dotsc,x_s] + O(d^\circ \geq s+1).
\end{equation}
Now we prove by induction on $\ell$ that there exists an integer $C_\ell$ and a word $w_\ell \in F_s$ such that
\begin{equation}\label{eq:BCHorderl}
x_1 + \dotsb + x_s = w_\ell^*\bigl(\frac{x_1}{C_\ell},\dotsc,\frac{x_s}{C_\ell}\bigr) + O(d^\circ \geq \ell),
\end{equation}
where $w_\ell^*$ is the word map induced by $w_\ell$, which is well defined on a neighborhood of $0$ in $\g^{\times s}$.
For $\ell = 2$, this is given by \eqref{eq:BCHorder1}. Suppose the result has been proved for some $\ell\geq 2$.
Let $f$ be the sums of terms of degree $\ell$ in the remainder term $O(d^\circ\geq\ell)$ on the right-hand side of \eqref{eq:BCHorderl}.
Since $f$ has rational coefficients, there is an integer $C \geq 1$ such that we can write
\[ f(x_1,\dotsc ,x_s) = \sum_{i = 1}^N m_i\bigl(\frac{x_1}{C},\dotsc,\frac{x_s}{C}\bigr)\]
where each $m_i$ is a repeated bracket of length $\ell$.
Therefore, by \eqref{eq:*commutators} and \eqref{eq:BCHorder1}, there is $w' \in F_s$ a product of repeated commutators such that
\begin{equation*}
f(x_1,\dotsc ,x_s) = w'^*\bigl(\frac{x_1}{C},\dotsc,\frac{x_s}{C}\bigr) + O(d^\circ \geq \ell + 1).
\end{equation*}
Thus,
\begin{align*}
x_1 + \dotsb + x_s &= w_\ell^*\bigl(\frac{x_1}{C_\ell},\dotsc,\frac{x_s}{C_\ell}\bigr) + w'^*\bigl(\frac{x_1}{C},\dotsc,\frac{x_s}{C}\bigr) + O(d^\circ \geq \ell + 1)\\
&= w_\ell^*\bigl(\frac{x_1}{C_\ell},\dotsc,\frac{x_s}{C_\ell}\bigr)*w'^*\bigl(\frac{x_1}{C},\dotsc,\frac{x_s}{C}\bigr) + O(d^\circ \geq \ell + 1).
\end{align*}
In the last step we used the fact that $w'^*\bigl(\frac{x_1}{C},\dotsc,\frac{x_s}{C}\bigr)$ has valuation at least $\ell$. This finishes the proof of the induction step and concludes the proof of the lemma.
\end{proof}

To conclude the proof of Theorem~\ref{producttheorem} in the case $\rf$ is abelian, we choose $\ell > \frac{1}{\eps}$ and apply Lemma~\ref{lm:BCHnormed} to elements $x_i$ of the form $x_i = \Ad(a_i)y_i$, with $a_i \in A^s$ and $y_i \in X$.
By definition $X \subset B_{\g}(0,\delta^\eps)$, so the error term is indeed of size $O_s(\delta^{\ell\eps}) = O(\delta)$, and therefore,
\begin{align*}
\exp[C\Ad(a_1)y_1+ \dotsb + C\Ad(a_s)y_s] &\in w(a_1e^{y_1}a_1^{-1},\dots,a_se^{y_s}a_s^{-1})B_G(1,O(\delta))\\
&\in (A \cup \{1\} \cup A^{-1})^{s'}B_G(1,O(\delta)),
\end{align*}
for some $s' = O_{s,\ell}(1)$. Recalling (\ref{ballinlie}), we obtain
\begin{align*}
B_G(1,\delta^{\eps_0})
& \subset \exp[C \cdot B_{\g}(0,\delta^{\eps_0})]\\
& \subset \exp[C \cdot \bracket{A,X}_s +B_{\g}(0,C\delta))]\\
& \subset A^{s'} B_G(1, O(\delta)).
\end{align*}
This finishes the proof of the theorem in the case $\rf$ is abelian.
\end{proof}

\subsection{Proof of the product theorem, general case}
\label{subsec:rad}

We now explain how to deal with a perfect Lie group $G$ with Lie algebra $\g=\s\ltimes\rf$, where $\rf$ is nilpotent by Lemma~\ref{lm:radnil} but not abelian.
This will follow from the previous case, together with a quantitative version of the following fact: If $R$ is a nilpotent Lie group, a subset $A\subset R$ generates the group $R$ if and only if $A\mod [R,R]$ generates $R/[R,R]$.

For $A$ and $B$ subsets of a group $G$, we shall write $[A,B]$ to denote the set of all commutators $[a,b]$, $a \in A$, $b \in B$.
This notation is in conflict with the group theoretic commutator which is the subgroup generated by all commutators.
Despite this inconvenience, it will be clear from the context what $[A,B]$ means.
The precise lemma that we shall use is as follows.

\begin{lemma}\label{lm:BRl1}
Let $R$ be a connected nilpotent Lie group with descending central series $R_i$, $i \geq 1$, i.e. $R_1 = R$ and for $i\geq 1$, $R_{i+1} = [R,R_i]$.
For each $i\geq 1$ there is $k \geq 1$ such that for all $\rho > 0$ small enough,
\[
B_{R_{i+1}}(1,\rho^2) \subset [B_R(1,\rho),B_{R_i}(1,\rho)]^k.
\]
\end{lemma}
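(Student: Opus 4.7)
My plan is to transport the problem to the Lie algebra via $\exp$, build a parametric family of products of group commutators whose logarithm has a well-controlled differential, and then apply a quantitative inverse function theorem uniformly in $\rho$.

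First I write $\rf$ for the Lie algebra of $R$ and $\rf_i$ for that of $R_i$, so that $\rf_{i+1}=[\rf,\rf_i]$. Since $\rf_{i+1}$ is spanned by such brackets, I can choose $X_1,\dots,X_m\in\rf$ and $Y_1,\dots,Y_m\in\rf_i$ with $m=\dim\rf_{i+1}$ such that $\bigl([X_j,Y_j]\bigr)_{j=1}^m$ is a basis of $\rf_{i+1}$. After rescaling these by a fixed small constant, I may assume that for some fixed $K>0$ and all sufficiently small $\rho$, one has $\exp(\rho X_j)\in B_R(1,\rho)$ and $\exp(\rho u Y_j)\in B_{R_i}(1,\rho)$ whenever $|u|\le K$.

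Next, for $\rho>0$ small and $u=(u_1,\dots,u_m)\in[-K,K]^m$, I would study
\[\Psi_\rho(u) := \prod_{j=1}^m \bigl[\exp(\rho X_j),\exp(\rho u_j Y_j)\bigr] \in R_{i+1}.\]
Using the Baker-Campbell-Hausdorff formula, together with the observation that every iterated bracket appearing in $\log[\exp X,\exp Y]$ involves at least one $X$ and at least one $Y$ (hence lies in $\rf_{i+1}$ whenever $X\in\rf$, $Y\in\rf_i$), I would show
\[\log\Psi_\rho(u) = \rho^2 L(u) + \rho^3 P(u) + O(\rho^4),\]
where $L(u)=\sum_j u_j[X_j,Y_j]\colon\R^m\to\rf_{i+1}$ is a fixed linear isomorphism and $P$ is a fixed polynomial in $u$ with bounded derivatives on $[-K,K]^m$. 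The $O(\rho^4)$ term absorbs both the deeper BCH corrections inside each commutator and the inter-commutator cross terms, which live in $[\rf_{i+1},\rf_{i+1}]\subset\rf_{i+2}$ and carry an extra factor of $\rho^2$.

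Then I rescale: setting $F_\rho := \rho^{-2}\log\Psi_\rho$, I get $F_\rho(u)=L(u)+O(\rho)$ and $DF_\rho(u)=L+O(\rho)$ uniformly on $[-K,K]^m$. For $\rho$ small enough, $DF_\rho$ lies within $\tfrac12\|L^{-1}\|^{-1}$ of $L$, so the quantitative inverse function theorem shows that $F_\rho$ is a diffeomorphism from some fixed neighborhood of $0\in\R^m$ onto an open set containing a ball $B_{\rf_{i+1}}(0,c_0)$, with $c_0>0$ depending only on the Lie algebra data. Undoing the rescaling and using that $\exp$ is bi-Lipschitz near $0$ in a nilpotent Lie algebra, I obtain
\[B_{R_{i+1}}(1,c_1\rho^2) \subset \operatorname{Im}\Psi_\rho \subset \bigl[B_R(1,\rho),B_{R_i}(1,\rho)\bigr]^m\]
for some $c_1>0$ independent of $\rho$.

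Finally, to promote $c_1\rho^2$ to $\rho^2$, I use that in any Lie group with a left-invariant Riemannian metric one has $B(1,Nr)\subset B(1,r)^N$: split a length-minimizing path of length $\le Nr$ into $N$ pieces of length $\le r$ and use left-invariance. Taking $N=\lceil 1/c_1\rceil$ yields
\[B_{R_{i+1}}(1,\rho^2) \subset B_{R_{i+1}}(1,c_1\rho^2)^N \subset \bigl[B_R(1,\rho),B_{R_i}(1,\rho)\bigr]^{mN},\]
and $k := mN$ is the integer claimed by the lemma. The main obstacle I expect is the BCH bookkeeping: I need the cubic remainder in $\log\Psi_\rho(u)$ to be uniform in $u$ on a fixed box, not merely pointwise, so that the inverse function theorem can be applied on a neighborhood of $0\in\R^m$ independent of $\rho$. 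Once that uniform estimate is secured, the rest of the argument is essentially automatic.
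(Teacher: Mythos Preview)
Your proposal is correct and follows essentially the same approach as the paper: choose brackets $[X_j,Y_j]$ forming a basis of $\rf_{i+1}$, form a product of group commutators, show via BCH that this map covers a ball of radius $\sim\rho^2$ in $R_{i+1}$, then iterate to absorb the constant. The one technical difference is that the paper avoids the uniform-in-$\rho$ inverse function theorem by reparametrizing with a square root: it sets $f_j(t)=[\exp(\sqrt{t}\,x_j),\exp(\sqrt{t}\,y_j)]$ (and the analogous expression for $t<0$), so that the single $\rho$-independent map $f(t_1,\dots,t_m)=f_1(t_1)\cdots f_m(t_m)$ is $C^1$ with $T_0f$ equal to the basis isomorphism, and one ordinary application of the inverse function theorem suffices.
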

\begin{proof}
Denote by $\rf_i$, $i \geq 1$ the descending central series of the Lie algebra $\rf$. Let $(z_1,\dotsc,z_m)$ be a basis of $\rf_{i+1}$ consisting of commutators $z_j = [x_j,y_j]$ with $x_j \in \rf$ and $y_j \in \rf_i$. For each $j$, consider the map $f_j \colon \R \to R_{i+1}$ defined by
\[f_j(t) =
\begin{cases}
  [\exp(\sqrt{t}x_j),\exp(\sqrt{t}y_j)] & \quad \text{if } t \geq 0 \\
  [\exp(\sqrt{-t}y_j),\exp(\sqrt{-t}x_j)]& \quad \text{if } t < 0
\end{cases}
\]
and further define $f \colon \R^m \to R_{i+1}$ by $f(t_1,\dotsc,t_m) = f_1(t_1) \dotsm f_m(t_m)$.
The function $f$ is of class $C^1$ and its differential at $0$ is
\[T_0f(h_1,\dotsc,h_m) = h_1z_1 + \dotsb + h_mz_m,\]
so it is a $C^1$-diffeomorphism on a neighborhood of $0$.
This implies that for some constant $c>0$ depending only on $R$,
\[ B_{R_{i+1}}(1,c\rho^2) \subset f(B_\R(0,\rho)) \subset [B_R(1,\rho),B_{R_i}(1,\rho)]^m.\]
This finishes the proof of the lemma, because for $\rho$ small enough, $B_{R_{i+1}}(1,c\rho^2)\cdot B_{R_{i+1}}(1,c\rho^2) \supset B_{R_{i+1}}(1,2c\rho^2)$.
\end{proof}

We are now ready to finish the proof of Theorem~\ref{producttheorem}.

\begin{proof}[Proof of Theorem~\ref{producttheorem}, general case]
Here again implied constants in Landau and Vinogradov notations depend on $G$ and $\kappa$.
Write the Lie algebra of $G$ as a semi-direct product $\g=\s\ltimes\rf$, with $\s$ semi-simple and $\rf$ a nilpotent ideal, and let $R$ be the nilpotent radical of $G$, i.e. the closed connected normal subgroup of $G$ with Lie algebra $\rf$.
The proof goes by induction on the nilpotency class $\ell$ of $R$.

We have already seen that Theorem~\ref{producttheorem} holds if $\ell \leq 1$.
Now suppose that $R$ has nilpotency class equal to $\ell\geq 2$ and that Theorem~\ref{producttheorem} has been proved if the nilpotency class is strictly less than $\ell$.

Let $R_i$, $i \geq 1$ denote the lower central series of the group $R$.
Each $R_i$ , $i \geq 1$ is closed and connected, and the Lie algebra of $R_i$ is exactly the $i$-th term in the lower central series of $\rf$, see e.g. \cite[Theorem 5.7, p. 55]{OnishchikVinberg}.
We first remark that the assumptions of Theorem~\ref{producttheorem} are preserved when projecting to a quotient.
The nilpotency class of the radical of $G/R_\ell$ is $\ell-1$.
Let $\eps_1>0$ be some constant, whose value will be specified later.
By the induction hypothesis applied to $G/R_\ell$, provided $\eps$ is small enough compared to $\eps_1$, for some integer $s$ depending on $\kappa$ and $\eps_1$,
\[B_{G}(1,\delta^{\eps_1}) \subset (A\cup \{1\} \cup A^{-1})^s B_G(1,\delta) R_\ell\]
Without loss of generality, we may replace $(A\cup \{1\} \cup A^{-1})^s B_G(1,\delta)$ by $A$, and assume that
\[B_{R}(1,\delta^{\eps_1}) \subset (R\cap A) R_\ell \quad \text{and} \quad B_{R_{\ell-1}}(1,\delta^{\eps_1}) \subset (R_{\ell-1} \cap A)R_\ell.\]
By Lemma~\ref{lm:BRl1}, we also have
\[B_{R_\ell}(1,\delta^{2\eps_1}) \subset [B_{R}(1,\delta^{\eps_1}),B_{R_{\ell-1}}(1,\delta^{\eps_1})]^{O(1)}.\]
From these inclusions and the fact that $R_\ell$ is in the center of $R$, it follows that
\begin{equation}\label{2eps1}
B_{R_\ell}(1,\delta^{2\eps_1}) \subset A^{O(1)}B_G(1,O(\delta)).
\end{equation}
At this stage replace $A^{O(1)}B_G(1,O(\delta))$ by $A$. The fact that $B_{R}(1,\delta^{\eps_1}) \subset A R_\ell$ and $B_{R_\ell}(1,\delta^{2\eps_1}) \subset A$ does not prove what we want yet but gives the lower bound
\[ N(A^2, \delta) \gg_G \delta^{-\dim(G) + O(\eps_1)}.\]
Covering $A^2$ by balls of radius $\frac{1}{2}\delta^{3\eps_1}$, we obtain
\[ N(A^{-2}\!A^2 \cap B_G(1,\delta^{3\eps_1}), \delta) \gg_G \delta^{-\dim(G) + O(\eps_1)}.\]
Write $A' = A^{-2}\!A^2 \cap B_G(1,\delta^{3\eps_1})$. Then $A'$ satisfies the assumptions of Theorem~\ref{producttheorem} with $\kappa = 1$ and $\eps = O(\eps_1)$. Hence if $\eps_1$ is small enough compared to $\eps_0$, then by the induction hypothesis again,
\[B_G(1,\delta^{\eps_0}) \subset A'^s B_G(1,\delta) R_\ell\]
for some $s$ depending on $\eps_0$. Since any element in $R_\ell$ involved in this inclusion is within distance $\delta^{2\eps_1}$ from the identity, we can conclude using \eqref{2eps1} that
\[B_G(1,\delta^{\eps_0}) \subset A'^{O(1)} B_G(1,\delta) A.\]
This finishes the proof of Theorem~\ref{producttheorem}.
\end{proof}

\subsection{Approximate subgroups in non-perfect Lie groups}
Here we prove Proposition~\ref{perfectisoptimal}. First, observe that in a nontrivial abelian Lie group, generalized arithmetic progressions (i.e. sums of arithmetic progressions) are the prototypes of approximate subgroups. Then in a non-perfect Lie group $G$, it suffices to lift a generalized arithmetic progression in its abelianization $G/[G,G]$ to obtain an approximate subgroup with the desired properties.

\begin{proof}[Proof of Proposition~\ref{perfectisoptimal}]
First consider the abelian case $G = \R^{\times d}$, with $d \geq 1$. Let $\kappa \in {(0,1]}$. Given a neighborhood $U$ of $0 \in \R^{\times d}$, let $r > 0$ be such that $B_{\R^d}(0,r) \subset U$. Define
\[P = \bigl\{ \delta^\kappa x \in \R^{\times d} \mid x \in \Z^{\times d} \cap {[-\delta^{-\kappa}r,\delta^{-\kappa}r]}^{\times d}\bigr\}.\]
It is easy to check that $P$ satisfy the required properties.

Now let $G$ be a simply connected non-perfect Lie group. Then $G/[G,G] \simeq \R^{\times d}$ where $d = \dim \g - \dim [\g,\g]$. Let $\pi \colon G \to \R^{\times d}$ the projection.  Given a neighborhood $U$ of $1_G \in G$, let $r > 0$ be such that $B_{G}(1_G,2r) \subset U$ and $B_{\R^d}(0,r) \subset \pi(U)$. Let $P$ be defined as above and put $A = B_G(1_G,2r) \cap \pi^{-1}(P)$.

On the one hand,
\[\ND{A} \approx_{G,r} \delta^{-\dim [\g,\g]} \ND{\pi(A)} \approx_{G,r}\delta^{-\dim [\g,\g]} \ND{P} \approx_{G,r} \delta^{-\dim[\g,\g] - d\kappa},\]
and for similar reason,
\[\ND{AAA} \ll_{G,r} \delta^{-\dim [\g,\g]} \ND{P+P+P} \ll_{G,r} \ND{A}.\]

On the other hand, when $\delta$ is small so that $\delta^\kappa < r$, $A$ is $\delta^\kappa$-dense in $B_G(1_G,r)$, that is, 
\[B_G(1_G,r) \subset A^{(\delta^\kappa)}.\] 
It follows immediately that for any connected normal subgroup $N \lhd G$, $\pi_{G/N}(A)$ is $\delta^\kappa$-dense in $B_{G/N}(1_{G/N},r)$ and hence
\[\forall \rho \geq \delta, \NR{\pi_{G/N}(A)} \gg_{G,r} \rho^{-\kappa}.\]
Moreover, it is not difficult to see that given a simply connected Lie group $G$ and $r > 0 $, there is $c = c(G,r) > 0$ such that no proper closed connected subgroup is $c$-dense in $B_G(1_G,r)$. From this we deduce that $A$ is $(c - \delta^\kappa)$-away from proper closed connected subgroups.
\end{proof}

\bibliographystyle{plain}
\bibliography{bibliography}

\end{document}